\def\qed{\nopagebreak\hfill{\rule{4pt}{7pt}}
\medbreak}
\def\p{{\overline{p}}}
\newlength{\boxedparwidth}
\hline \end{tabular} \end{center}}
\newtheorem{thm}{Theorem}[section]
\newtheorem{lem}[thm]{Lemma}
\newtheorem{cor}[thm]{Corollary}
\newcommand{\Rmnum}[1]{\expandafter\@slowromancap\romannumeral #1@}
\numberwithin{equation}{section}
\begin{document}

\baselineskip 20pt

\newcommand{\lr}[1]{\langle #1 \rangle}
\newcommand{\llr}[1]{\langle\hspace{-2.5pt}\langle #1
\rangle\hspace{-2.5pt}\rangle}

\begin{center}

 {\Large \bf Inequalities for the $k$-Regular Overpartitions}
\end{center}
\vskip 0.2cm

\begin{center}
{Yi Peng}$^{1}$, {Helen W.J. Zhang}$^{1, 2}$ and
  {Ying Zhong}$^{1}$ \vskip 2mm

$^{1}$School of Mathematics\\[2pt]
Hunan University, Changsha 410082, P.R. China\\[5pt]

$^2$Hunan Provincial Key Laboratory of \\ Intelligent Information Processing and Applied Mathematics,
\\[2pt] Changsha 410082, P.R. China\\[5pt]

Email: iampenny@hnu.edu.cn, \quad helenzhang@hnu.edu.cn, \quad YingZhong@hnu.edu.cn
\end{center}

\date{}
\vskip 6mm \noindent {\bf Abstract.}
Bessenrodt and Ono, Chen, Wang and Jia, DeSalvo and Pak were the first to discover the log-subadditivity, log-concavity, and the third-order Tur\'{a}n inequality of partition function, respectively.
Many other important partition statistics are proved to enjoy
similar properties.
This paper focuses on the partition function $\p_k(n)$, which counts the number of overpartitions of $n$ with no parts divisible by $k$.
We provide a combinatorial proof to establish that for any $k\geq2$, the partition function $\p_k(n)$ exhibits strict log-subadditivity. Specifically, we show that $\p_k(a)\p_k(b)>\p_k(a+b)$ for integers $a\geq b\geq1$ and $a+b\geq k$.
Furthermore, we investigate the log-concavity and the satisfaction of the third-order Tur\'{a}n inequality for $\p_k(n)$, where $2\leq k\leq9$.
\vskip 0.3cm

\noindent {\bf Keywords}: $k$-regular overpartition, strict log-subadditivity, log-concavity,  third order Tur\'{a}n inequalities  \vskip 0.3cm

\noindent {\bf AMS Classifications}:  11P82, 05A19, 30A10  \vskip 0.3cm

\section{Introduction}
The aim of this paper is to present certain inequality relations satisfied by $k$-regular overpartition function, specifically focusing on strict log-subadditivity, log-concavity, and the third-order Tur\'{a}n inequality.
Recall that a partition of a positive integer $n$ is a non-increasing sequence of positive integers whose sum is $n$.
Let $p(n)$ denote the number of partitions of $n$.
Bessenrodt and Ono \cite{Bessenrodt-Ono-2016} showed that $p(n)$ satisfies the strict log-subadditivity result, that is, for $a,b>1$ and $a+b>9$
\begin{align}\label{B-O}
p(a)p(b)>p(a+b).
\end{align}
Alanazi, Gagola and Munagi \cite{A-A-M-COMBI} resolved the conjecture proposed by Bessenrodt and Ono \cite{Bessenrodt-Ono-2016}, providing a combinatorial proof for equation \eqref{B-O}.
Since then, various partition functions have been proven to enjoy the same strict log-subadditivity as $p(n)$,
for instance, the overpartition function \cite{{Li-2023},{Liu-Zhang-2021}},
the $k$-regular partition function \cite{Beckwith-Bessenrodt-2016} and the spt-function \cite{Chen-2017,Dawsey-Masri-2019}.

Recall that an overpartition of a nonnegative integer $n$ is a
partition of $n$ where the first occurrence of each distinct part may be overlined.
In \cite{Lovejoy-2003}, Lovejoy initiated the exploration of an overpartition function, characterized by its count of overpartitions of $n$ with parts that are indivisible by $k$.
Subsequently, Shen \cite{shen-l-re-overpar} reinterpreted this function, giving it the appellation of $k$-regular overpartition $\p_k(n)$, and elucidated its generating function as follows.
\begin{align}\label{gene}
\sum_{n=0}^\infty \p_k(n)q^n=\frac{(q^k;q^k)_\infty(-q;q)_\infty}{(q;q)_\infty(-q^k;q^k)_\infty},
\end{align}
where $(a;q)_\infty$ stands for the $q$-shifted factorial
\[(a;q)_\infty=\prod_{n=1}^{\infty}(1-aq^{n-1}),\quad |q|<1.\]

The first result of this paper is the demonstration, using combinatorial methods, that $\p_k(n)$ satisfies strict log-subadditivity.
\begin{thm}\label{thm1}
For integers $k$, $a$ and $b$, if $k\geq 2$, $a\geq b\geq 1$, $a+b\geq k$, then
\begin{align*}
\p_k(a)\p_k(b)>\p_k(a+b).
\end{align*}
\end{thm}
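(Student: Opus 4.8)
\medskip
\noindent\textbf{Proof proposal.} Write $\overline{\mathcal P}_k(n)$ for the set of $k$-regular overpartitions of $n$, so that $\p_k(n)=|\overline{\mathcal P}_k(n)|$. The plan is to construct an injection $\Phi\colon\overline{\mathcal P}_k(a+b)\hookrightarrow\overline{\mathcal P}_k(a)\times\overline{\mathcal P}_k(b)$ that is not onto. The observation that makes this workable is that a $k$-regular overpartition is determined by the multiset of its parts, none divisible by $k$, together with a bit for each occurring value recording whether it is overlined; since ``no part divisible by $k$'' restricts only the part sizes, \emph{any} way of distributing the decorated parts of $\lambda\in\overline{\mathcal P}_k(a+b)$ into two sub-multisets of sums $a$ and $b$ automatically yields a pair in $\overline{\mathcal P}_k(a)\times\overline{\mathcal P}_k(b)$. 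So the real content is to make such a distribution canonical, and to deal with the $\lambda$ whose parts admit no splitting into whole parts of sums $a$ and $b$.

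To build $\Phi$ I would argue by cases. Call $\lambda$ $b$-\emph{separable} if some sub-multiset of its parts sums to exactly $b$. For $b$-separable $\lambda$, let $S$ be the lexicographically largest such sub-multiset (parts listed in weakly decreasing order, overlined copies preceding non-overlined copies of the same value), let $\nu$ carry the parts of $S$, let $\mu$ carry the rest, and set $\Phi(\lambda)=(\mu,\nu)$; here $\mu\cup\nu=\lambda$ as decorated multisets. For non-$b$-separable $\lambda$, the hypothesis $a\ge b$ forces a rigid shape — essentially a few parts exceeding $b$ together with, possibly, many equal parts whose common value cannot sub-sum to $b$ — and for these I would instead single out the largest part $\lambda_1$, replace it by $\lambda_1-c$, and create a new part $c$ (together with at most one small correcting part), choosing $c$ close to $b$ but shifted so that no newly created part is divisible by $k$. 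The role of $a+b\ge k$ is precisely to guarantee there is enough total mass to carry out such a shift while keeping every part $k$-admissible and the outcome outside the image of the separable branch. Overlines are transported by a fixed rule (an overlined value goes into $\nu$ whenever it is used there, new parts stay non-overlined, and no overline is ever destroyed), so that the decoration data is not lost; in particular $\mu\cup\nu$ is always a legitimate overpartition.

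Injectivity would be checked by exhibiting the inverse on $\mathrm{Im}\,\Phi$: given $(\mu,\nu)$ one first reads off which branch produced it — a non-separable output is recognizable from the correcting part, or from a largest part whose size or position could not occur for a separable output — and then inverts the corresponding recipe, re-uniting the decorated parts and re-extracting the canonical $S$ in the separable case, or merging the modified part back in the other. I expect this bookkeeping to be the main obstacle: one has to prove the two images disjoint, prove that $S$ is genuinely reconstructible from $\mu\cup\nu$, and prove that the shifted value $c$ can always be chosen subject to the two non-divisibility constraints — this last being where $a+b\ge k$ enters essentially, supplemented by a direct verification for the smallest values of $a+b$. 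Finally, for strictness one exhibits a pair outside $\mathrm{Im}\,\Phi$: the pair $\big((\overline 1,1^{\,a-1}),(\overline 1,1^{\,b-1})\big)$, both of whose coordinates overline the value $1$, cannot equal $\Phi(\lambda)$ for any $\lambda$, because $\Phi$ never creates an overline nor splits one overlined part into two overlined copies, so its components do not re-glue to an overpartition; hence $\p_k(a)\p_k(b)\ge|\mathrm{Im}\,\Phi|+1=\p_k(a+b)+1$.
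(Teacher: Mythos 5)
Your overall strategy --- one global injection $\Phi\colon\overline{\mathcal P}_k(a+b)\hookrightarrow\overline{\mathcal P}_k(a)\times\overline{\mathcal P}_k(b)$ with a ``separable'' branch that merely redistributes parts and a ``non-separable'' branch that breaks one part --- is the right kind of idea, but the proof has genuine gaps, and they sit exactly where the difficulty of the theorem lives. The non-separable branch is never constructed. You must produce, for every $\lambda$ with no sub-multiset of parts summing to $b$, a value $c$ such that $c$, $\lambda_1-c$ and any ``correcting part'' are all nonzero and not divisible by $k$, and then prove the outputs disjoint from those of the separable branch and from one another. None of this is routine: the paper's Lemma \ref{lem1}, which plays the role of your non-separable branch in a restricted setting, needs six cases according to the residue modulo $k$ of the broken piece, with sub-cases for whether the broken part and its neighbour are overlined and a separate treatment for even $k$, precisely because the replacement parts (e.g.\ $2^{m+1}$, or $\overline{m+1},m$, or $y-(k-j),k-j$) must be engineered so that the images can be told apart. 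Your overline rule is also internally inconsistent: ``new parts stay non-overlined'' destroys the overline of a broken overlined part, so two $\lambda$'s differing only in that overline would collide (for instance $2$ and $\overline 2$ would both break as $1+1$), contradicting injectivity and your own stipulation that no overline is ever destroyed.

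The strictness step is the clearest symptom. Your argument that $\bigl((\overline 1,1^{a-1}),(\overline 1,1^{b-1})\bigr)$ lies outside the image nowhere uses $a+b\ge k$; yet for $a=b=1$ and $k\ge 3$ one has $\p_k(1)^2=4=\p_k(2)$, so no injection can miss a pair and the strict inequality is genuinely false. Hence the hypothesis $a+b\ge k$ must enter the construction in an essential way that your sketch does not specify. For contrast, the paper avoids a single global injection: it inducts on $a+b$ twice, first writing $\p_k(a+b)=\p_k(a+b\mid\text{no }2\text{'s})+\p_k(a+b-2)$ and then reducing $\p_k(\cdot\mid\text{no }2\text{'s})$ to $\p_k(\cdot\mid\text{no }1\text{'s and no }2\text{'s})$, so explicit injections are only needed in the controlled situations of Lemmas \ref{lem1}--\ref{lem3}, strictness comes from an explicitly missed pair there, and $a+b\ge k$ enters cleanly through the base case $\p_k(a)\p_k(b)=\p(a)\p(b)>\p_k(a+b)$. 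You would need a comparably detailed case analysis, or some such inductive scaffolding, to complete your argument.
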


Having set the stage with our first result, demonstrating the strict log-subadditivity of $\p_k(n)$ through combinatorial methods, we now transition towards a broader context. We delve into the realm of log-concavity, also known as Tur\'{a}n inequalities, along with their higher-order counterparts, known as cubic inequalities, originate from the examination of the Maclaurin coefficients of real entire functions within the Laguerre-P\'{o}lya class.
A sequence $\{\alpha_n\}_{n\geq0}$ is considered to be log-concave if it fulfills the Tur\'{a}n inequalities, which are given by
\[\alpha_n^2\geq \alpha_{n-1}\alpha_{n+1},~~\text{for}~n\geq 1.\]
Moreover, a sequence $\{\alpha_n\}_{n\geq0}$ is defined as satisfying the third order Tur\'{a}n inequalities if for $n\geq1$,
\begin{align*}
4(\alpha_n^2-\alpha_{n-1}\alpha_{n+1})
(\alpha_{n+1}^2-\alpha_{n}\alpha_{n+2})
\geq(\alpha_n\alpha_{n+1}-\alpha_{n-1}\alpha_{n+2})^2.
\end{align*}

The initial exploration of the Tur\'{a}n inequalities, along with their higher-order variants for the partition function, can be credited to the research undertaken by Chen, Jia and Wang \cite{turan-for-partition}, DeSalvo and Pak \cite{DeSalvo-Pak-2015}.
In the wake of these seminal contributions, a substantial body of related research has burgeoned. This includes noteworthy contributions from scholars such as Bringmann, Kane, Rolen, and Tripp \cite{bringmann-i-bessel}, Dong and Ji \cite{dong-ji}, Engel \cite{Engel-2017}, Griffin, Ono, Rolen, and Zagier \cite{Griffin-Ono-Rolen-Zagier-2019}, Jia \cite{Jia-2023}, Liu and Zhang \cite{Liu-Zhang-2021}, and Ono, Pujahari, and Rolen \cite{Ono-Ken-Rolen-2022}.
In furthering this line of research, our work reveals new insights about the properties of $k$-regular overpartitions. We establish that when $2\leq k\leq9$, these overpartitions fulfill the conditions of log-concavity and adhere to the third-order Tur\'{a}n inequalities.

In order to establish these inequality relations, our initial undertaking involves developing the asymptotic formulae for $\p_k(n)$, specifically when $2\leq k\leq9$. This critical step in our methodology ultimately allows us to unveil the stated inequality relations.
For convenience, we adopt the notation below, set
\[{\mu_2(n)}=\frac{\pi }{2}\sqrt{2n},\quad\mu_3(n)=\frac{\pi}{3}\sqrt{6n},
\quad\mu_4(n)=\frac{\pi}{2}\sqrt{3n},
\quad\mu_5(n)=\frac{2\pi}{5}\sqrt{5n},\]
\[\quad\mu_6(n)=\frac{\pi}{6}\sqrt{30n},
\quad\mu_7(n)=\frac{\pi}{7}\sqrt{42n},
\quad\mu_8(n)=\frac{\pi}{4}\sqrt{14n},
\quad\mu_9(n)=\frac{2\pi}{3}\sqrt{2n}.\]

For the sake of convenience, we use the following notations throughout this paper:
\[{\mu_k^-}=\mu_k(n-1),\quad {\mu_k}=\mu_k(n),\quad{\mu_k^+}=\mu_k(n+1).\]
\begin{thm}\label{thm2}
Let $I_1(s)$ denote the first modified Bessel function of the first kind.
For each fixed $2\leq k\leq9$, and $\mu_k\geq n_k$, we have
\begin{align*}
\p_k(n)=C_k(n)I_1({\mu_k})+R_k(n),
\end{align*}
where
\[|R_k(n)|\leq R_k'(n),\]
and all these values are defined in Table \ref{pk(n)-bounds}.
\begin{table}[h]
  \centering
\begin{tabular}{c|c|c|c||c|c|c|c}
  $k$ & $n_k$ & $C_k(n)$ & $R_k'(n)$ &$k$ & $n_k$ & $C_k(n)$ & $R_k'(n)$
  \\[3pt]   \hline \rule{0pt}{23pt}
  $2$ & $22$
  & {\large$\frac{\pi^2}{\sqrt8{\mu_2}}$}
  & {\large$\frac{\sqrt3\pi^{\frac{3}{2}}}{2\sqrt{\mu_2}}$}
  $\exp\left(\frac{{\mu_2}}{3}\right)$
  &$6$ & $130$
  & {\large$\frac{5\sqrt6 \pi^2}{18\mu_6}$}
  & {\large$\frac{10\sqrt{15}\pi^{\frac{3}{2}}}{27\sqrt{\mu_6}}$}
  $\exp\left(\frac{\mu_6}{5}\right)$
  \\[9pt] \hline \rule{0pt}{23pt}
  $3$ & $49$
  & {\large$\frac{2\sqrt3 \pi^2}{9\mu_3}$}
  & {\large$\frac{4\sqrt{30}\pi^{\frac{3}{2}}}{27\sqrt{\mu_3}}$}
  $\exp\left(\frac{\mu_3}{5}\right)$
  &$7$ & $102$
  & {\large$\frac{18\sqrt7\pi^2}{49\mu_7}$}
  & {\large$\frac{108\sqrt{42}\pi^{\frac{3}{2}}}{343\sqrt{\mu_7}}$}
  $\exp\left(\frac{\mu_7}{3}\right)$
  \\[9pt] \hline \rule{0pt}{23pt}
  $4$ & $41$
  & {\large$\frac{3 \pi^2}{4\mu_4}$}
  & {\large$\frac{3\sqrt{6}\pi^{\frac{3}{2}}}{4\sqrt{\mu_4}}$}
  $\exp\left(\frac{\mu_4}{3}\right)$
  &$8$ & $129$
  & {\large$\frac{7\sqrt2\pi^2}{8\mu_8}$}
  & {\large$\frac{7\sqrt3\pi^{\frac{3}{2}}}{4\sqrt{\mu_8}}$}
  $\exp\left(\frac{\mu_8}{3}\right)$
  \\[9pt] \hline \rule{0pt}{23pt}
  $5$ & $58$
  & {\large$\frac{8\sqrt5 \pi^2}{25\mu_5}$}
  & {\large$\frac{32\sqrt{30}\pi^{\frac{3}{2}}}{125\sqrt{\mu_5}}$}
  $\exp\left(\frac{\mu_5}{3}\right)$
  &$9$ & $268$
  & {\large$\frac{8\pi^2}{9\mu_9}$}
  & {\large$\frac{16\sqrt{10}\pi^{\frac{3}{2}}}{27\sqrt{\mu_9}}$}
  $\exp\left(\frac{\mu_9}{5}\right)$
  \\[3pt]
\end{tabular}
\caption{Values of $n_k$, $C_k(n)$ and $R_k'(n)$ for $2\leq k\leq9$.}
\label{pk(n)-bounds}
\end{table}
\end{thm}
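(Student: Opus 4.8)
\noindent\emph{Proof idea.}\quad The plan is to read $\p_k(n)$ off as a Fourier coefficient of an eta‑quotient and run the Hardy--Ramanujan circle method while keeping every constant explicit. First I would put the generating function in product form: since $(-q;q)_\infty=(q^2;q^2)_\infty/(q;q)_\infty$ and $(-q^k;q^k)_\infty=(q^{2k};q^{2k})_\infty/(q^k;q^k)_\infty$, identity \eqref{gene} becomes
\[
\sum_{n\ge0}\p_k(n)q^n=\frac{(q^2;q^2)_\infty\,(q^k;q^k)_\infty^{\,2}}{(q;q)_\infty^{\,2}\,(q^{2k};q^{2k})_\infty}=:F_k(q),
\]
which, with $q=e^{2\pi i\tau}$, equals $\eta(k\tau)^2\eta(2\tau)/\bigl(\eta(\tau)^2\eta(2k\tau)\bigr)$, a weight‑zero modular function on a congruence subgroup. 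Its dominant singularity on $|q|=1$ is at $q=1$; the other roots of unity contribute exponentially less, which is verified inside the minor‑arc estimate below.

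Next, write $q=e^{-z}$ with $z=\rho+2\pi i\varphi$, $\rho>0$ small. Feeding the transformation formula $(e^{-mz};e^{-mz})_\infty=\sqrt{2\pi/(mz)}\,\exp\!\bigl(-\pi^2/(6mz)+mz/24\bigr)\bigl(1+O(e^{-4\pi^2/(mz)})\bigr)$ into the four factors of $F_k$ gives, uniformly in a thin sector about the positive real axis,
\[
F_k(e^{-z})=\gamma_k\exp\!\Bigl(\tfrac{\pi^2(k-1)}{4k\,z}\Bigr)\bigl(1+\mathcal E_k(z)\bigr),\qquad |\mathcal E_k(z)|\le c_k'\,e^{-c_k/\rho},
\]
with $\gamma_k,c_k,c_k'>0$ explicit; note $\tfrac{\pi^2(k-1)}{4k}=\mu_k(n)^2/(4n)$, which is why $\mu_k$ is the exponential order. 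The same formulas, now centred at each other root of unity $\zeta=e^{2\pi i h/\ell}$, produce an explicit \emph{upper} bound $|F_k(e^{-z})|\le B_k\exp(\kappa_k/\rho)$ off a small neighbourhood of $q=1$, with $0<\kappa_k<\tfrac{\pi^2(k-1)}{4k}$; for each fixed $k\in\{2,\dots,9\}$ this is a finite computation, and the size of $\kappa_k$ relative to the main exponent is exactly what forces the exponentially small term of $R_k'(n)$ to read $e^{\mu_k/3}$ or $e^{\mu_k/5}$. Coupled with a Lehmer‑type lower bound for $|(q;q)_\infty|$, this disposes of the minor arcs.

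Finally I would assemble the pieces. From $\p_k(n)=\tfrac1{2\pi i}\oint_{|q|=e^{-\rho}}F_k(q)q^{-n-1}\,dq$ choose $\rho=\rho_\ast:=\tfrac{\pi}{2}\sqrt{(k-1)/(kn)}$, the saddle point of $z\mapsto\tfrac{\pi^2(k-1)}{4kz}+nz$, and on the major arc $|\varphi|\le\rho_\ast/(2\pi)$ substitute the expansion above. The principal part is a Laplace‑type integral which, via $\tfrac1{2\pi i}\int e^{\lambda/z+nz}\,dz=\sqrt{\lambda/n}\,I_1(2\sqrt{\lambda n})$, evaluates to the Bessel main term $C_k(n)I_1(\mu_k)$ with $C_k(n)$ as in Table~\ref{pk(n)-bounds}. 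The four error sources---the factor $\mathcal E_k(z)$, the truncation of the $\varphi$‑integral at $\pm\rho_\ast/(2\pi)$, the completion of the truncated integral to a full vertical line, and the minor‑arc contribution---are each dominated by an explicit constant times a small power of $\mu_k$ times $e^{\mu_k/3}$ (resp.\ $e^{\mu_k/5}$); summing them yields $R_k(n)$ with $|R_k(n)|\le R_k'(n)$.

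Conceptually the argument is routine, so the real work---and the main obstacle---is entirely quantitative: obtaining clean, numerically explicit values of $\kappa_k$, $B_k$ and the Lehmer‑type constants, carefully bounding the tail of the Laplace integral, and then checking, separately for each of the eight values of $k$, that the four error terms are jointly majorised by the single tabulated $R_k'(n)$. This last verification succeeds precisely once $\mu_k\ge n_k$, and that is the reason for the thresholds $n_k$.
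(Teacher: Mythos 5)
Your reduction of \eqref{gene} to the eta-quotient form $(q^2;q^2)_\infty(q^k;q^k)_\infty^2/\bigl((q;q)_\infty^2(q^{2k};q^{2k})_\infty\bigr)$ is correct, as is the identification of the exponential order $\pi^2(k-1)/(4kz)$ (hence of $\mu_k$), and the Laplace--Bessel evaluation of the arc at $q=1$ would indeed reproduce $C_k(n)I_1(\mu_k)$. The genuine gap is your treatment of the minor arcs. On the saddle circle $|q|=e^{-\rho_*}$ with $\rho_*=\mu_k/(2n)$ one has $|q|^{-n}=e^{\mu_k/2}$ identically, while $|F_k|$ decays at most polynomially in $n$ on sub-arcs of fixed positive length (e.g.\ near $q=-1$, where $\Delta_3(2)\le 0$); hence $\int_{\mathrm{minor}}|F_k(q)|\,|q|^{-n-1}\,|dq|\gg n^{-O(1)}e^{\mu_k/2}$, which already dwarfs the tabulated $R_k'(n)\asymp e^{\mu_k/3}/\sqrt{\mu_k}$. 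Concretely, your proposed sup-norm bound $|F_k|\le B_k e^{\kappa_k/\rho}$ with $\kappa_k$ coming from the order-$3$ cusps yields $\exp(\kappa_k/\rho_*+n\rho_*)=\exp(\mu_k/18+\mu_k/2)=\exp(5\mu_k/9)$, not $\exp(\mu_k/3)$. No estimate that takes absolute values on the minor arcs can reach the stated bound: the factor $e^{\mu_k/3}$ (resp.\ $e^{\mu_k/5}$) is not a sup-norm phenomenon but the value of a completed Bessel integral $I_1(\mu_k/j)$ attached to the roots of unity of order $j=3$ (resp.\ $j=5$). To see it you must perform the exact modular expansion at \emph{every} Farey point $h/j$ with $\Delta_3(j)>0$, extract a term $I_1(\mu_k/j)\hat A_j(n)/j$ from each, and only then estimate trivially via $|\hat A_j(n)|\le j$.

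That is exactly what the paper does, not by rerunning the circle method but by invoking Chern's explicit Rademacher-type formula for eta-quotients (Lemma \ref{thm3.1}): after computing $\Delta_1=\Delta_2=0$, the $\Delta_3(l)$, $\Delta_4(l)$ and $\mathcal{L}_{>0}$, the proof reduces to (i) a numerical bound on Chern's remainder with $N=\lfloor\mu_k\rfloor$ (e.g.\ $|E_2(n)|\le 631$), (ii) the tail bound $\sum_{3\le j\le N}I_1(\mu_k/j)|\hat A_j(n)|/j\le\tfrac{N}{2}I_1(\mu_k/3)$ followed by $I_1(s)\le\sqrt{2/(\pi s)}\,e^s$, and (iii) absorbing the constant from (i) into $R_k'(n)$ once $\mu_k\ge n_k$ --- which is where the thresholds $n_k$ in Table \ref{pk(n)-bounds} actually originate (your attribution of $n_k$ to a joint majorization of four error sources is therefore also off). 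To complete your programme you would either have to reprove Chern's lemma, i.e.\ carry out the full Farey dissection with a radius $1-O(1/n)$ and exact transformation formulas at all cusps, or cite it and do the finite computations (i)--(iii).
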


\begin{thm}\label{thm1.5}
For $2\leq k\leq9$, $\p_k(n)$ satisfies log-concavity for $n\geq \bar{n}_k$ and the third-order Tur\'{a}n inequality for $n\geq \hat{n}_k$.
The specific ranges are provided in the following table.
\begin{center}
\renewcommand{\arraystretch}{1.5}
\begin{tabular}{p{1cm}|p{1cm}|p{1cm}|p{1cm}|p{1cm}|p{1cm}|p{1cm}|p{1cm}|p{1cm}}
\centering $k$ & \centering$2$ & \centering $3$ &  \centering$4$ & \centering$5$ & \centering$6$  & \centering$7$ & \centering$8$ & $\quad9$  \\
\hline
\centering$\bar{n}_k$ & \centering $21$ & \centering $4$ & \centering$5$ & \centering$6$ & \centering$1$ & \centering$1$ & \centering$1$ & $\quad1$\\
\hline
\centering$\hat{n}_k$ & \centering $65$  & \centering $23$   & \centering$28$ & \centering$26$ & \centering$11$ & \centering$22$ & \centering$23$ & $~~10$\\
\end{tabular}
\end{center}
\end{thm}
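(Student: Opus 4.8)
\textbf{Proof proposal for Theorem \ref{thm1.5}.} The plan is to reduce both families of inequalities to explicit analytic estimates coming from the asymptotic expansion in Theorem \ref{thm2}, and then to close the remaining finite gap by direct computation. Fix $k$ with $2\leq k\leq 9$. For $n$ large enough that $\mu_k\geq n_k$ (and also $\mu_k^-\geq n_k$), Theorem \ref{thm2} gives
\[
\p_k(n)=C_k(n)I_1(\mu_k)+R_k(n),\qquad |R_k(n)|\leq R_k'(n),
\]
and the key point is that $R_k'(n)$ is exponentially smaller than the main term: $I_1(\mu_k)$ grows like $e^{\mu_k}/\sqrt{2\pi\mu_k}$, while $R_k'(n)$ carries a factor $\exp(\mu_k/3)$ or $\exp(\mu_k/5)$. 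Thus the sign of $\p_k(n)^2-\p_k(n-1)\p_k(n+1)$ is governed by the main term. Using the standard bounds for $I_1$ (for instance the ones in the cited references, e.g. $\frac{e^s}{\sqrt{2\pi s}}(1-\tfrac{c}{s})\leq I_1(s)\leq \frac{e^s}{\sqrt{2\pi s}}(1+\tfrac{c'}{s})$), one expands $\mu_k^{\pm}=\mu_k\sqrt{1\pm 1/n}$ and obtains
\[
\mu_k^+ + \mu_k^- - 2\mu_k = -\frac{\mu_k}{4n^2}+O\!\left(\frac{\mu_k}{n^3}\right)<0,
\]
so $I_1(\mu_k)^2>I_1(\mu_k^+)I_1(\mu_k^-)$ up to lower-order corrections; combining this with the polynomial prefactors $C_k$ and absorbing the $R_k$-errors yields $\p_k(n)^2>\p_k(n-1)\p_k(n+1)$ for all $n$ above an explicit threshold $N_k$.

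For the third-order Tur\'an inequality the strategy is the same but the algebra is heavier. Writing $\Delta_n:=\alpha_n^2-\alpha_{n-1}\alpha_{n+1}$ with $\alpha_n=\p_k(n)$, one must show
\[
4\Delta_n\Delta_{n+1}\geq(\alpha_n\alpha_{n+1}-\alpha_{n-1}\alpha_{n+2})^2.
\]
Substituting $\alpha_n=C_k(n)I_1(\mu_k)+R_k(n)$ and expanding each $\mu_k(n+j)$ to sufficiently high order in $1/n$, both sides become, to leading order, explicit expressions in $\mu_k$ and $n$; the inequality becomes strict with a positive gap of order (main term)${}^4/n^2$ or so, again dwarfing every contribution of the $R_k$-terms. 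Making these comparisons effective — i.e. pinning down a concrete $n$ beyond which the leading-order analysis provably dominates all error terms — gives thresholds $\bar N_k$ (log-concavity) and $\hat N_k$ (third-order Tur\'an).

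Finally, for $n$ between $\bar n_k$ (resp. $\hat n_k$) and the analytic threshold $\bar N_k$ (resp. $\hat N_k$), one verifies the inequalities by direct numerical evaluation of $\p_k(n)$ from the generating function \eqref{gene}; since the thresholds in the table are small, this is a finite and routine check. One also records (or checks by hand) that the inequalities \emph{fail} just below $\bar n_k$ and $\hat n_k$, which is why those are the exact ranges.

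The main obstacle, I expect, is not the existence of the thresholds but their \emph{effectivity}: the asymptotic analysis must be carried out with fully explicit constants so that the crossover point where ``main term beats error term'' can be certified, and the $1/n$-expansions of $I_1(\mu_k\sqrt{1+j/n})$ needed for the cubic inequality involve enough terms that bookkeeping the remainders rigorously is delicate. Once that is done, matching the analytic threshold to the small values $\bar n_k,\hat n_k$ in the table is just a matter of extending the numerical range of the finite check.
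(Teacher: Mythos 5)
Your overall architecture --- explicit asymptotics from Theorem \ref{thm2}, a two-sided control of the relevant ratio, and a finite computational check below the analytic threshold --- is the same as the paper's, which works throughout with $Q_k(n)=\p_k(n-1)\p_k(n+1)/\p_k(n)^2$ and sandwiches it between explicit functions of $\mu_k$ (Theorem \ref{thm4}). However, two steps in your sketch would fail as written. First, the first-order Bessel bounds $\tfrac{e^s}{\sqrt{2\pi s}}(1-\tfrac{c}{s})\leq I_1(s)\leq \tfrac{e^s}{\sqrt{2\pi s}}(1+\tfrac{c'}{s})$ are not strong enough. The quantity to be detected is $1-Q_k(n)$, which is of order $\mu_k^{-3}$ (coming from $\mu_k^-+\mu_k^+-2\mu_k\asymp -\mu_k^{-3}$), whereas independent $\pm c/s$ uncertainties at the three points $\mu_k^-,\mu_k,\mu_k^+$ leave the ratio $I_1(\mu_k^-)I_1(\mu_k^+)/I_1(\mu_k)^2$ undetermined to within $1\pm O(1/\mu_k)$, which swamps the signal. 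One needs the expansion of $I_1$ through $s^{-5}$ with an explicit $O(s^{-6})$ remainder (the paper's Lemma \ref{lem3.2}, taken from Dong--Ji), so that the low-order coefficients cancel exactly in the ratio and only an $O(\mu_k^{-6})$ uncertainty survives; this is precisely the content of Lemma \ref{lem3.3}.

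Second, and more seriously, for the third-order Tur\'an inequality the leading terms of the two sides cancel: dividing by $\alpha_n^2\alpha_{n+1}^2$ turns the inequality into $4(1-u)(1-v)>(1-uv)^2$ with $u=Q_k(n)$, $v=Q_k(n+1)$, and when $u\approx v\approx 1-\varepsilon$ both sides equal $4\varepsilon^2+O(\varepsilon^3)$. The true gap is therefore of order $(1-Q_k(n))^3\asymp \mu_k^{-9}$ relative to $\alpha_n^2\alpha_{n+1}^2$, not ``(main term)$^4/n^2$'' as you assert; a leading-order expansion shows only $0\geq 0$ and decides nothing. The paper resolves this with Jia's criterion: if $\tfrac{15}{16}\leq u<v<1$ and $v<u+\sqrt{(1-u)^3}$, then the inequality holds; it then checks $Q_k(n+1)-Q_k(n)=O(\mu_k^{-5})$ against $\sqrt{(1-Q_k(n))^3}\asymp \mu_k^{-9/2}$, so the criterion is met with room to spare. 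Some device of this kind (or a rigorously bookkept higher-order expansion that tracks the cancellation) is indispensable. The finite-check portion of your plan is fine and matches the paper, though the paper does not claim, and you need not verify, failure below $\bar n_k$ or $\hat n_k$.
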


The paper is organized as follows.
In Section \ref{Sec-2}, we rigorously prove the strict log-subadditivity of $\p_k(n)$ using combinatorial methods.
Section \ref{Sec-3} utilizes the results established by Chern, Ji and Dong to derive the asymptotic formulas for $\p_k(n)$ when $2\leq k\leq9$.
Lastly, in Section \ref{Sec-4}, the derived asymptotic formula is applied to demonstrate the log-concavity and third-order Tur\'{a}n inequalities of $\p_k(n)$ for $2\leq k\leq9$.

\section{Proof of Theorem \ref{thm1}}\label{Sec-2}
In this section, we establish a series of lemmas to support the proof. Throughout this section, we employ combinatorial methods to demonstrate the log-subadditivity property of $\p_k(n)$, considering that exponents refer to multiplicities.
\begin{lem}\label{lem1}
For an integer $k$ with $k\geq2$, if $a$, $b$ are integers with $a,b\geq 1$, then
\[\p_k(a|\ {\rm no\ 1^{\prime}s})\p_k(b|\ {\rm no\ 2^{\prime}s})\geq\p_k(a+b|\ {\rm no\ 1's\ and\ no \ 2's}).\]
\end{lem}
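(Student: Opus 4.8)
The plan is to realise the three quantities as cardinalities of sets of restricted $k$-regular overpartitions and then to inject one into the product of the other two. Write $\mathcal A$, $\mathcal B$, $\mathcal C$ for the $k$-regular overpartitions of $a$ with no part $1'$, of $b$ with no part $2'$, and of $a+b$ with neither $1'$ nor $2'$, respectively; it suffices to produce an injection $\Phi\colon\mathcal C\hookrightarrow\mathcal A\times\mathcal B$. The underlying idea is the one used by Alanazi, Gagola and Munagi for $p(n)$: given $\pi\in\mathcal C$, split the parts of $\pi$ greedily into a block of weight $a$ and a block of weight $b$, breaking a single part when no exact split by whole parts is available, and route the resulting pieces so that no forbidden part is ever produced. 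The two one-sided restrictions are precisely what makes this work: since $\pi$ has no $1'$ and no $2'$, all of its parts of size $1$ or $2$ are non-overlined, hence $k$-regular (for $k=2$ there is no part of size $2$ at all) and free to go to either block; moreover an overlined sliver $1'$ arising from breaking a part may be placed in the $b$-block, which forbids only $2'$, and a sliver $2'$ in the $a$-block, which forbids only $1'$.

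To make this precise I would fix a reading order on the parts of an overpartition: weakly decreasing in size, and for each size the overlined copy (if present) ahead of the non-overlined copies. Given $\pi\in\mathcal C$, scan its parts in this order, appending each part to a block $\beta$ while the weight of $\beta$ stays $\le b$ and to a block $\alpha$ otherwise. If the weight of $\beta$ ends up equal to $b$, output $(\alpha,\beta)$: then $\alpha$ is a sub-overpartition of $\pi$, hence $k$-regular with no $1'$, and $\beta$ is $k$-regular with no $2'$. Otherwise the weight of $\beta$ is $b-c$ for some $c\ge1$; by the greedy rule $c$ is strictly smaller than every part of $\alpha$, and $\alpha$ is non-empty of weight $a+c$. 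Let $P$ be the first part of $\alpha$ in the reading order. I would remove $P$ from $\alpha$, put into $\alpha$ a $k$-regular overpartition of weight $|P|-c$ with no $1'$, adjoin to $\beta$ a $k$-regular overpartition of weight $c$ with no $2'$, and carry the overline originally on $P$ (if any) on one of these new pieces, placed in whichever of $\alpha,\beta$ admits it. This is always possible: $|P|-c\ge1$; every positive integer has a representation as a $k$-regular overpartition with no $1'$ (take a single part, refined by splitting off a non-overlined $1$ if that part is a multiple of $k$); and the exceptional shapes $|P|-c\in\{1,2\}$ and $c\in\{1,2\}$ are handled by the two one-sided allowances noted above. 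Set $\Phi(\pi)=(\alpha,\beta)$.

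It then remains to verify that $\Phi(\pi)\in\mathcal A\times\mathcal B$ — the weights are $a$ and $b$, and $k$-regularity together with the forbidden parts hold by construction — and, crucially, that $\Phi$ is injective. I would prove injectivity by describing the inverse on the image: given $(\alpha,\beta)$, one first reads off from the weights and the fixed order whether $\pi$ was split exactly or one part was broken; in the broken case one re-fuses the transferred piece of $\beta$ with the modified leading part of $\alpha$ and restores its overline from the bookkeeping, the fixed order together with ``$c$ lies below every part of $\alpha$'' making the reconstruction unambiguous. The hard part will be exactly this injectivity: the bare greedy split is not reversible, so the breaking rule must be pinned down tightly enough — which part is broken, how the new pieces are shaped, where the overline migrates — that $(\alpha,\beta)\mapsto\pi$ is well defined on the image; and the finitely many degenerate parameter values (small $a$ or $b$, where a block can be empty or forced) need to be treated separately by hand.
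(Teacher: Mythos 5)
Your argument is built on a misreading of the restrictions, and even under your own reading it stops short of the actual content of the lemma. In this paper ``$\mathrm{no}\ 1\mathrm{'s}$'' does not mean ``no overlined $1$'': it means no \emph{non-overlined} part equal to $1$, with $\overline 1$ still permitted (this is forced by the recurrences $\p_k(b)=\p_k(b|\ \mathrm{no\ 1's})+\p_k(b-1)$ and $\p_k(N|\ \mathrm{no\ 2's})=\p_k(N|\ \mathrm{no\ 1's\ and\ no\ 2's})+\p_k(N-1|\ \mathrm{no\ 2's})$ used later, and by the fact that the paper's map $f_1$ explicitly places $\overline 1$ and $\overline 2$ into the ``no $1$'s'' component). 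Consequently your assertion that every part of size $1$ or $2$ of $\pi\in\mathcal C$ is non-overlined is exactly backwards, and your device for realising the sliver of weight $|P|-c$ in the $a$-block --- ``a single part, refined by splitting off a non-overlined $1$ if that part is a multiple of $k$'' --- inserts a forbidden part. Worse, the supporting claim that every positive integer admits a $k$-regular overpartition with no (non-overlined) $1$'s is false: for $k=2$ there is no such overpartition of $2$ at all. The residue classes $|P|-c\equiv 0\pmod k$ are precisely where the paper has to work hardest (its cases for $y=k$ and $y\equiv 0\pmod k$, $y\ge 2k$, which split the sliver as $(m+1,m)$, $(y-(k-j),\,k-j)$ or $(y-1,\overline1)$ depending on the residue $j$ of $\lambda_i$), and your sketch has no working substitute for this.

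The deeper problem is that you have written a plan rather than a proof: the entire content of the lemma is the precise definition of the map on the broken part and the verification of injectivity, and you defer both. You do not specify \emph{which} overpartition of weight $|P|-c$ goes into $\alpha$, \emph{which} of weight $c$ goes into $\beta$, or \emph{where} the overline of $P$ migrates; without these choices injectivity cannot even be tested, and the naive choices fail (an overlined and a non-overlined broken part with the same numerical data collide unless the overline is encoded in the shape of the inserted pieces, and an inserted piece can coincide in size with a surviving part of $\alpha$, so the receiver cannot tell which parts were added --- your ``$c$ lies below every part of $\alpha$'' does not disambiguate this). The paper's proof consists of exactly this bookkeeping: six cases according to the value of the sliver modulo $k$ and the overlining of $\lambda_i$ and $\lambda_{i-1}$, a modified map for even $k$, and then a careful verification that the images of the different cases are pairwise disjoint (using multiplicity-of-$2$ patterns such as $2^{m+1}$ and congruence signatures of the last parts to separate them). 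None of that is present or reconstructible from your outline, so the proposal does not establish the inequality.
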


\begin{proof}
For $\lambda=(\lambda_1,\dots,\lambda_t)\in \overline P_k(a+b)$, we define \[i=i(\lambda)=\mathrm max\{j\in \mathbb{N}|1\leq j \leq t,\lambda_j+\dots+\lambda_t \geq b\}.\]
Let $\lambda_i=x+y$ such that
\[y+\lambda_1+\dots+\lambda_{i-1}=a ~~\text{and}~~ x+\lambda_{i+1}+\dots+\lambda_t=b,\]
where $x\geq1$.
With these definitions, we introduce the map
\[f_1:\ \overline P_k(a+b|\text{ no 1's and no 2's})\rightarrow \overline P_k(a|\text{ no 1's})\oplus \overline P_k(b|\text{ no 2's}).\]
Next, we aim to establish the injectivity of $f_1$ by examining the parity of $k$.
For $\lambda=(\lambda_1,\dots,\lambda_t)\in \overline P_k(a+b|\ {\rm no\ 1's\ and\ no \ 2's})$.
When $k$ is odd, we consider the case of $k=2m+1$ ($m\geq2$), specifically focusing on the situation when $k\geq5$. The proof method for $k=3$ follows a similar approach.
We will divide our analysis into six cases for further discussion.
\begin{enumerate}[1.]
\item \label{1} If $y=0$, we set
\begin{align*}
f_1(\lambda)=(\lambda_1,\dots,\lambda_{i-1};\lambda_i,\dots,\lambda_t).
\end{align*}
\item \label{2} If $y=1$, we consider two separate cases by distinguishing whether $\lambda_i$ is overlined.

{\rm(1).} When $\lambda_i$ is overlined, we have
$f_1(\lambda)=(\lambda_1,\dots,\lambda_{i-1},\overline 1;\lambda_{i+1},\dots,\lambda_t,1^x).$

{\rm(2).} When $\lambda_i$ is non-overlined and $\lambda_{i-1}$ is overlined,
it leads to the following expression.
\begin{align*}
f_1(\lambda)=
\begin{dcases}
(\lambda_1,\dots,\overline{\lambda_{i-1}-k},2^{m+1},
\lambda_{i+1},\dots,\lambda_t,1^x),&\lambda_{i-1}\geq {k+2},
\\[3pt]
(\lambda_1,\dots,\lambda_{i-2},\overline2,2^{c};\lambda_{i+1},\dots,\lambda_t,1^x),
&3\leq \lambda_{i-1}=2c+1\leq{k+1},
\\[3pt]
(\lambda_1,\dots,\lambda_{i-2},\overline2,2^{c-1},\overline1;
\lambda_{i+1},\dots,\lambda_t,1^x),&3\leq \lambda_{i-1}=2c\leq{k+1}.
\end{dcases}
\end{align*}

{\rm(3).} When both $\lambda_i$ and $\lambda_{i-1}$ are non-overlined, we observe the following mapping.
\begin{align*}
f_1(\lambda)=
\begin{dcases}
(\lambda_1,\dots,\lambda_{i-1}-k,2^{m+1};\lambda_{i+1},\dots,\lambda_t,1^x),&\lambda_{i-1}\geq {k+2},
\\
(\lambda_1,\dots,\lambda_{i-2},2^{c+1};\lambda_{i+1},\dots,\lambda_t,1^x),&3\leq \lambda_{i-1}=2c+1\leq{k+1},
\\
(\lambda_1,\dots,\lambda_{i-2},2^{c},\overline1;\lambda_{i+1},\dots,\lambda_t,1^x),&3\leq \lambda_{i-1}=2c\leq{k+1}.
\end{dcases}
\end{align*}
\item \label{3}
   If $y\equiv i\pmod{k},\ (i=2,\dots,k-1)$, we have $y\geq i$, which leads to the following mapping.
   \begin{align*}
   f_1(\lambda)=\begin{cases}
   (\lambda_1,\dots,\lambda_{i-1},\overline y;\lambda_{i+1},\dots,\lambda_t,1^x),&\lambda_i\ \mathrm{is\ overlined},
   \\[5pt]
   (\lambda_1,\dots,\lambda_{i-1},y;\lambda_{i+1},\dots,\lambda_t,1^x),&\lambda_i\ \mathrm{is}\ \mathrm{non\mbox{-}overlined}.
   \end{cases}
   \end{align*}
\item \label{4}
   If $y\equiv1\pmod{k}$ and $y\geq{k+1}$, we obtain the following mapping.
   \begin{align*}
   f_1(\lambda)=\begin{cases}(\lambda_1,\dots,\lambda_{i-1},\overline y;\lambda_{i+1},\dots,\lambda_t,1^x),&\lambda_i\ \mathrm{is\ overlined},
   \\[5pt]
   (\lambda_1,\dots,\lambda_{i-1},y;\lambda_{i+1},\dots,\lambda_t,1^x),&\lambda_i\ \mathrm{is}\ \mathrm{non\mbox{-}overlined}.\end{cases}
   \end{align*}
\item \label {5}
   If $y=k$, we represent the mapping in the following form.
   \begin{align*}
   f_1(\lambda)=\begin{cases}
   (\lambda_1,\dots,\lambda_{i-1},
   \overline{m+1},m;\lambda_{i+1},\dots,\lambda_t,1^x),
   &\lambda_i\geq{k+2} \  \mathrm{and\ overlined},
   \\
   (\lambda_1,\dots,\lambda_{i-1},{m+1},m;
   \lambda_{i+1},\dots,\lambda_t,1^x),
   &\lambda_i\geq{k+2}\ \mathrm{and\ non\mbox{-}overlined},
   \\
   (\lambda_1,\dots,\lambda_{i-1},\overline2,2^{m-1},\overline1;
   \lambda_{i+1},\dots,\lambda_t,1),
   &\lambda_i=k+1\ \mathrm{and\ overlined},
   \\
   (\lambda_1,\dots,\lambda_{i-1},2^m,\overline1;
   \lambda_{i+1},\dots,\lambda_t,1),
   &\lambda_i=k+1\ \mathrm{and\ non\mbox{-}overlined}.\end{cases}
\end{align*}
\item \label {6}
  If $y\equiv0\pmod{k}$ and $y\geq{2k}$, we divide our analysis into two distinct cases based on whether $\lambda_i$ is overlined or not.

  {\rm(1).} When $\lambda_i$ is overlined and $\lambda_i\equiv j\pmod{k}$, we observe that
  \begin{align*}
  f_1(\lambda)=\begin{dcases}
  (\lambda_1,\dots,\lambda_{i-1},\overline{y-(k-j)},k-j;
  \lambda_{i+1},\dots,\lambda_t,1^x),
  &1\leq j\leq k-2,
  \\[5pt]
  (\lambda_1,\dots,\lambda_{i-1},\overline{y-1},\overline1;
  \lambda_{i+1},\dots,\lambda_t,1^x),
  &j=k-1.
\end{dcases}
\end{align*}

  {\rm(2).} When $\lambda_i$ is non-overlined and $\lambda_i\equiv j\pmod{k}$, the following observation becomes apparent.
  \begin{align*}
  f_1(\lambda)=\begin{dcases}
  (\lambda_1,\dots,\lambda_{i-1},{y-(k-j)},k-j;\lambda_{i+1},\dots,\lambda_t,1^x),
  &1\leq j\leq k-2,
  \\[5pt]
  (\lambda_1,\dots,\lambda_{i-1},{y-1},\overline1;
  \lambda_{i+1},\dots,\lambda_t,1^x),
  &j=k-1.
\end{dcases}
\end{align*}
\end{enumerate}

Notice that in cases \ref{3} and \ref{4}, $y\geq 2$ and $2$ (or $\overline2$) as a part appear at most once, which has no intersection with case \ref{2}.
In case \ref{5}, if $\lambda_i={k+1}$, then $x=1$.
However, in case \ref{2} when $\lambda_i$ is non-overlined, we have $\lambda_i\geq3$ and $x\geq2$, which implies there is no intersection between the two cases.
Similarly, in case \ref{2} where $\lambda_i$ is overlined, if $x=1$, then $\lambda_i=y+x=\overline2$.
According to the principles of notations, $\lambda_{i-1}>\lambda_i=\overline2$, which means the appearance of two is zero.
Thus, case \ref{5} has no intersection with case \ref{2} either.
By analogy, case \ref{5} shares no intersection with cases \ref{3} and \ref{4}.
In case \ref{6}, where $j\neq{k-1}$, the multiplicity of two is at most one. When $j=k-1$, we have $x\equiv {k-1}\pmod{k}$.
However, in case \ref{2}, when $\lambda_i$ is overlined and $x\equiv {k-1}\pmod{k}$, we have $\lambda_i=y+x=1+x \equiv0\pmod{k}$, which implies that $k$ divides $\lambda_i$, a contradiction.
Hence, case \ref{6} is disjoint from case \ref{2}.
The same reasoning applies when comparing case \ref{6} with cases \ref{3} and \ref{4}.
Regarding the comparison between case \ref{5} and case \ref{6}, we note that in case \ref{6}, the difference between the last two parts of the partition, namely $y-(k-j)$ and $k-j$, is greater than $1$.
Therefore, when $k$ is odd, $f_1$ is one-to-one.

When $k$ is even, let $k=2m,$ here we present the cases below for $m\geq3$, that is $k\geq6$. The proof approaches to the cases for $k=2$ and $k=4$ are similar to $k\geq6$.
\[f_1'(\lambda)=\begin{dcases}(\lambda_1,\dots,\overline{\lambda_{i-1}-k},2^{m},\overline 1;\lambda_{i+1},\dots,\lambda_t,1^x),&y=1,\lambda_i\ \mathrm{is\ non \mbox{-}overlined},\lambda_{i-1}\geq {k+2},\\ &\lambda_{i-1}\ \mathrm{is\ overlined};\\
(\lambda_1,\dots,\lambda_{i-1}-k,2^{m},\overline 1;\lambda_{i+1},\dots,\lambda_t,1^x),&y=1,\lambda_i\ \mathrm{is\ non\mbox{-}overlined},\lambda_{i-1}\geq {k+2},\\ &\lambda_{i-1}\ \mathrm{is\ non\mbox{-}overlined};\\
(\lambda_1,\dots,\lambda_{i-1},\overline{m},m;\lambda_{i+1},\dots,\lambda_t,1^x),&y=k,\lambda_i\ \mathrm{is\ overlined};\\
(\lambda_1,\dots,\lambda_{i-1},m^2;\lambda_{i+1},\dots,\lambda_t,1^x),&y=k,\lambda_i\ \mathrm{is\ non\mbox{-}overlined};\\
f_1(\lambda),&\mathrm{else}.\end{dcases}
\]
With minor adjustments, it is obvious to know $f_1'$ is also one-to-one.
\par Therefore, through all cases above, we can always construct a map that is one-to-one for $k\geq2$ to meet the inequality. Hence, the proof for the inequalities is completed.
\end{proof}
\begin{lem}\label{lem2}
If $k$, $a$ are integers with $k\geq2$, $a\geq1$, then\[\p_k(a|\ \mathrm{no\ 2's})\p_k(1)>\p_k(a+1|\ \mathrm{no\ 2's}).\]
\end{lem}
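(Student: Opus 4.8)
The plan is to use that $\p_k(1)=2$, since the only overpartitions of $1$ are $(1)$ and $(\overline{1})$; the assertion is therefore equivalent to $2\,\p_k(a\mid\text{no 2's})>\p_k(a+1\mid\text{no 2's})$. I would prove this by constructing a one-to-one map $\psi$ from $\overline P_k(a+1\mid\text{no 2's})$ into the disjoint union $\overline P_k(a\mid\text{no 2's})\oplus\overline P_k(a\mid\text{no 2's})$ of two labelled copies, and then exhibiting an element of the target outside the image of $\psi$.

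The map $\psi$ would be built, in the spirit of the map $f_1$ in Lemma~\ref{lem1}, by deleting or manufacturing a part equal to $1$. Given $\lambda\in\overline P_k(a+1\mid\text{no 2's})$: if $\lambda$ contains a non-overlined $1$, delete one such part and put the result in the first copy; if $\lambda$ contains $\overline{1}$ but no non-overlined $1$, delete $\overline{1}$ and put the result, which then has no part $1$, in the second copy; and if $\lambda$ has no part $1$, let $\lambda_t$ be its smallest part, which is $\ge 3$ since $\lambda$ has no part $1$ or $2$, and replace it --- according as $\lambda_t$ is non-overlined or overlined --- by $\lambda_t-1$ copies of a non-overlined $1$, respectively by $\overline{1}$ together with $\lambda_t-2$ copies of a non-overlined $1$, again placing the outcome in the second copy. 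In every case the sum drops by exactly $1$, the only parts ever created are $1$'s (so no $2$, and no multiple of $k$ since $k\ge 2$), and overlining stays legal because $\lambda$ carries no $\overline{1}$ in the last two cases; hence $\psi$ maps into the claimed target.

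For injectivity I would note that the copy-label distinguishes the first case from the others, and that within the second copy the three remaining outcomes are separated by their pattern of $1$'s: no part $1$; at least two non-overlined $1$'s and no $\overline{1}$; or an $\overline{1}$ present. In each outcome the number of non-overlined $1$'s determines $\lambda_t$, and reinstating that part --- overlined exactly when an $\overline{1}$ was produced --- recovers $\lambda$. For the strict inequality I would observe that the first copy is precisely the image of the first case, whereas in the second copy any $\sigma$ having exactly one non-overlined $1$ and no $\overline{1}$ is omitted; such a $\sigma$ arises by adjoining one non-overlined $1$ to an overpartition of $a-1$ into parts $\ge 3$ avoiding multiples of $k$.

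I expect the main obstacle to be the bookkeeping around this last step: one must keep the recursive third case compatible with both the ``no 2's'' and the $k$-regularity restrictions (for instance that $\lambda_t-1$ and $\lambda_t-2$ never force a forbidden part, and that the reinstated part is genuinely the smallest), check carefully that the three second-copy outcomes are pairwise disjoint so that $\psi$ is truly injective, and treat the small values of $a$ for which the explicit omitted $\sigma$ above is unavailable by direct inspection. The map and the reconstructions themselves are routine once the cases are organized.
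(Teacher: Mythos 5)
Your construction is essentially the paper's own proof of Lemma \ref{lem2}: its map $f_2$ pairs each overpartition with an element of $\overline P_k(1)=\{1,\overline 1\}$ (your two labelled copies), deletes a non-overlined $1$ or the $\overline 1$ in the first two cases, unfolds the smallest part $\lambda_t$ into $1^{\lambda_t-1}$ or $\overline 1,1^{\lambda_t-2}$ in the remaining case, and exhibits the same omitted element, namely a partition with exactly one non-overlined $1$ and no $\overline 1$ paired with $\overline 1$. Your closing caveat about small $a$ is the one point where you are more careful than the paper, which simply asserts that such a witness exists without verifying it for every $a\geq 1$.
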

\begin{proof}
Let $\lambda=(\lambda_1,\dots,\lambda_t,{\overline{1}}^r,1^s)\in \overline P_k(a+1|{\rm no\ 2's})$ with $r\in\{0,1\}$.
We introduce the mapping
\[f_2: \overline P_k(a+1|\ \mathrm{no\ 2's})\rightarrow
\overline P_k(a|\ \mathrm{no\ 2's})\oplus\overline P_k(1)\]
as
\begin{align*}
f_2(\lambda)=\begin{cases}
(\lambda_1,\dots,\lambda_t,\overline 1^r,1^{s-1};1),&{\rm if}\ s\geq1,
\\
(\lambda_1,\dots,\lambda_{t-1},1^{\lambda_t-1};\overline1),&{\rm if}\ s=0,r=0,\lambda_t\ {\rm is\ non\mbox{-}overlined},
\\
(\lambda_1,\dots,\lambda_{t-1},\overline1,1^{\lambda_t-2};\overline1),&{\rm if}\ s=0,r=0,\lambda_t\ {\rm is\ overlined},
\\
(\lambda_1,\dots,\lambda_t;\overline1),&{\rm if}\ s=0,r=1.\end{cases}
\end{align*}

The mapping $f_2$ is well-defined and injective. However, it is evident that $(\lambda_1,\dots,\lambda_t,1;\overline1)$, with $\lambda_t>1$, is not in the image of $f_2$. Therefore, the inequality holds, which completes the proof.
\end{proof}

\begin{lem}\label{lem3}
If $k$, $a$ are integers with $k\geq2$, $a\geq1$, then\[\p_k(a|\ \mathrm{no\ 2's})\p_k(2)>\p_k(a+2|\ \mathrm{no\ 2's}).\]
\end{lem}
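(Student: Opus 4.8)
<br>

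The plan is to mimic the injective-map strategy of Lemma \ref{lem2}, now splitting off a part of size $2$ (overlined or not) rather than a $1$. Since $\p_k(2)$ counts overpartitions of $2$ with no parts divisible by $k$, for $k\geq 3$ these are $2,\ \overline 2,\ 1+1,\ \overline 1+1$ (four of them), while for $k=2$ only $1+1$ and $\overline 1+1$ survive; I will need to keep an eye on this smaller target set for $k=2$. The key point is that an overpartition $\lambda\in\overline P_k(a+2\mid\text{no 2's})$ has the shape $(\lambda_1,\dots,\lambda_t,\overline 1^{\,r},1^{s})$ with $r\in\{0,1\}$, and I want to peel off weight $2$ into the second component $\overline P_k(2)$ while landing the remaining weight $a$ in $\overline P_k(a\mid\text{no 2's})$.

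First I would handle the case $s\geq 2$: send $\lambda\mapsto(\lambda_1,\dots,\lambda_t,\overline 1^{\,r},1^{s-2};\,1+1)$. Next, $s=1$: then either $r=1$, giving $(\lambda_1,\dots,\lambda_t,1;\overline 1\!+\!1)\mapsto$ wait — I must be careful, that would move only weight $1$; instead with $s=1,r=1$ send $\lambda\mapsto(\lambda_1,\dots,\lambda_t;\overline 1\!+\!1)$, and with $s=1,r=0$ send $\lambda\mapsto(\lambda_1,\dots,\lambda_t,1;\,?)$ — no, here I peel off the lone $1$ together with $1$ from the tail. The cleanest bookkeeping: when $s=1,r=0$ and $\lambda_t$ is non-overlined, map to $(\lambda_1,\dots,\lambda_{t-1},1^{\lambda_t-1};\,1\!+\!1)$; when $s=1,r=0$ and $\lambda_t$ overlined, map to $(\lambda_1,\dots,\lambda_{t-1},\overline 1,1^{\lambda_t-2};\,1\!+\!1)$. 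Finally $s=0$: split according to $r$ and the overlining of $\lambda_t$, and in the remaining subcases produce an image whose second coordinate is $2$ or $\overline 2$ by removing $2$ from $\lambda_t$ (splitting $\lambda_t$ into $\lambda_t-2$ plus the removed $2$, or into smaller parts when $\lambda_t-2<$ the next part, exactly as in Lemma \ref{lem1}'s handling of small residues), using the overline bit of $\lambda_t$ to decide between $2$ and $\overline 2$. For $k=2$ one replaces any would-be image containing a part $2$ or $\overline 2$ — which is illegal — by an equivalent $1\!+\!1$ or $\overline 1\!+\!1$ configuration, mirroring the $f_1\to f_1'$ adjustment; the parts peeled into the second coordinate must then only ever be $1\!+\!1$ or $\overline 1\!+\!1$, which is fine since those are precisely $\p_2(2)$'s objects.

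Having defined $f_3$ on all cases, I would verify injectivity by checking that the cases are pairwise disjoint in their images — the discriminating data being the value of $s$ before the map (recoverable from the multiplicity of $1$ and $\overline 1$ in the first coordinate together with the second coordinate), whether the last ``large'' part was altered, and its overline bit — and then exhibit an element of $\overline P_k(a\mid\text{no 2's})\oplus\overline P_k(2)$ not in the image to get the strict inequality: e.g.\ $(\lambda_1,\dots,\lambda_t,1,1;\,\overline 2)$ with $\lambda_t>1$ for $k\geq 3$, and a suitable analogue for $k=2$, is never hit because no case outputs a second coordinate $\overline 2$ while simultaneously leaving two extra $1$'s with a large tail untouched.

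The main obstacle I anticipate is exactly the same one that made Lemma \ref{lem1} lengthy: ensuring the map is well-defined into $\overline P_k(a\mid\text{no 2's})$, i.e.\ that the first coordinate never acquires a part equal to $2$ or $\overline 2$, never creates a repeated overlined part, and stays free of multiples of $k$. This forces the case analysis on $\lambda_t\bmod k$ and on whether $\lambda_t$ is large ($\geq k+2$) or small, with the small-$\lambda_t$ subcases producing runs of $1$'s (and at most one $\overline 1$) instead of a forbidden $2$; as in Lemma \ref{lem1}, verifying disjointness across all these subcases, and separately re-checking everything for $k=2,3$ where the ``generic'' recipe degenerates, is the delicate part, while the final strictness step is routine.
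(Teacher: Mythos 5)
Your proposal is correct in outline and is essentially the paper's own strategy: an explicit case-defined injection $f_3$ from $\overline P_k(a+2\mid\text{no 2's})$ into $\overline P_k(a\mid\text{no 2's})\oplus\overline P_k(2)$, organized by the tail $\overline 1^{\,r}1^{s}$, followed by exhibiting an element outside the image. Your cases for $s\geq2$ and $s=1$ match the paper's almost verbatim (up to which element of $\overline P_k(2)$ you place in the second slot). The one place you diverge is the $s=0$ branch, and there the paper's construction is strictly simpler than what you sketch: rather than replacing $\lambda_t$ by $\lambda_t-2$ (which, as you anticipate, forces a Lemma \ref{lem1}-style analysis on $\lambda_t\bmod k$, on $\lambda_t-2=2$, and on overline clashes), the paper dissolves $\lambda_t$ \emph{entirely} into $1$'s with at most one $\overline 1$ --- e.g.\ $(\dots,\overline1,1^{\lambda_t-3};\overline1,1)$ when $s=r=0$ and $\lambda_t\geq3$ is overlined, and $(\dots,\overline1,1^{\lambda_t-2};\overline2)$ when $s=0$, $r=1$ --- so the first coordinate never acquires a part equal to $2$ or a multiple of $k$, and no residue analysis is needed at all. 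If you adopt that device your "main obstacle" disappears and the proof closes uniformly in $k$. One point where you are actually more careful than the paper: you correctly note that for $k=2$ the second coordinate cannot be $2$ or $\overline2$ (these are not $2$-regular), so those outputs must be replaced by $1+1$ and $\overline1+1$; the paper's displayed $f_3$ glosses over this. Your non-image witness is serviceable, though the paper's choice $(\lambda_1,\dots,\lambda_t;\overline1,1)$ with $\lambda_t>1$ is easier to verify, since every case producing second coordinate $\overline1,1$ leaves trailing $1$'s in the first coordinate.
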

\begin{proof}
Define a map
\[f_3: \overline P_k(a+2|\ \mathrm{no\ 2's})\rightarrow \overline P_k(a|\ \mathrm{no\ 2's})\oplus\overline P_k(2)\]
by
\begin{align*}
f_3(\lambda)=\begin{cases}
(\lambda_1,\dots,\lambda_t,\overline 1^r,1^{s-2};2),&{\rm if}\ s\geq2,
\\
(\lambda_1,\dots,\lambda_t;\overline2),&{\rm if}\ s=1,~r=1,
\\
(\lambda_1,\dots,\lambda_{t-1},\overline1,1^{\lambda_t-2};1^2),&{\rm if}\ s=1,~r=0,~\lambda_t\ {\rm is\ overlined},
\\
(\lambda_1,\dots,\lambda_{t-1},1^{\lambda_t-1};1^2),&{\rm if}\ s=1,~r=0,~\lambda_t\ {\rm is\ non\mbox{-}overlined},
\\
(\lambda_1,\dots,\lambda_{t-1};1^2),
&{\rm if}\ s=0,~r=0,~\lambda_t=\overline2,
\\
(\lambda_1,\dots,\lambda_{t-1},\overline1,1^{\lambda_t-3};\overline1,1),
&{\rm if}\ s=0,~r=0,~\lambda_t\geq3,\ {\rm is\ overlined},
\\
(\lambda_1,\dots,\lambda_{t-1},1^{\lambda_t-2};\overline1,1),
&{\rm if}\ s=1,~r=0,~\lambda_t\geq3,\ {\rm is\ non\mbox{-}overlined},
\\
(\lambda_1,\dots,\lambda_{t-1},\overline1,1^{\lambda_t-2};\overline2),
&{\rm if}\ s=0,~r=1,~\lambda_t\ {\rm is\ overlined},
\\
(\lambda_1,\dots,\lambda_{t-1},1^{\lambda_t-1};\overline2),
&{\rm if}\ s=0,~r=1,~\lambda_t\ {\rm is\ non\mbox{-}overlined}.
\end{cases}
\end{align*}

The mapping $f_3$ is well-defined and injective. It is clear that $(\lambda_1,\dots,\lambda_{t};\overline1,1)$, with $\lambda_t>1$, is not in the image of $f_3$. This completes the proof.
\end{proof}

\begin{lem}\label{lem4}
If $k$, $a$, $b$, are integers with $k\geq2$, $a\geq1$, $b\geq3$  and $a+b\geq{k+1}$, then\[\p_k(a|\ \mathrm{no\ 2's})\p_k(b)>\p_k(a+b|\ \mathrm{no\ 2's}).\]
\end{lem}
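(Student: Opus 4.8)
The plan is to establish Lemma \ref{lem4} by a direct combinatorial construction in the same spirit as Lemmas \ref{lem2} and \ref{lem3}: build an explicit injection
\[
f_4:\ \overline P_k(a+b\mid \text{no 2's})\ \longrightarrow\ \overline P_k(a\mid \text{no 2's})\oplus \overline P_k(b),
\]
and then exhibit at least one partition in the target that is not in the image of $f_4$, which upgrades the inequality from ``$\geq$'' to the strict ``$>$''. Since the hypotheses now include $b\geq 3$ and $a+b\geq k+1$, the number $b$ (and also $\overline b$, $\overline{b-1}$, etc.) are legitimate parts in $\overline P_k(b)$, giving us more room to absorb excess into the second coordinate than we had in Lemmas \ref{lem2} and \ref{lem3}.

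First I would take $\lambda=(\lambda_1,\dots,\lambda_t,\overline 1^{\,r},1^s)\in\overline P_k(a+b\mid\text{no 2's})$ with $r\in\{0,1\}$, and split on how much weight is carried by the $1$'s and by the largest non-unit part $\lambda_t$. The easy case is $s\geq b$: strip off $b$ ones and record the part $b$ in the second coordinate, i.e. $f_4(\lambda)=(\lambda_1,\dots,\lambda_t,\overline1^{\,r},1^{s-b};\,b)$; the remaining partition of $a$ still has no 2's. When $0\leq s<b$, I would peel off $\lambda_t$ (and the $\overline1$ if $r=1$), push a chunk of size $b$ or $b-1$ (overlined appropriately according to whether $\lambda_t$ and the residue mod $k$ force overlining) into the second coordinate, and redistribute what is left of $\lambda_t$ as a string of $1$'s and possibly one $\overline 1$ appended to $\lambda_1,\dots,\lambda_{t-1}$ — exactly the mechanism used in Lemma \ref{lem3}, but now shifting $b$ units instead of $2$. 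The condition $a+b\geq k+1$ guarantees that after moving $b$ units out there is still enough left (at least, after accounting for the no-2's and $k$-regularity constraints) to form a valid element of $\overline P_k(a\mid\text{no 2's})$, and $b\geq 3$ ensures the second-coordinate part $b$ avoids the forbidden value $2$ and is large enough that its decompositions (e.g. into $\overline1,1^{\,b-1}$ or $\overline1,1,\dots$) live in $\overline P_k(b)$.

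To verify injectivity I would, as in the earlier lemmas, check that the cases are mutually distinguishable from the output alone: the size of the second coordinate (whether it is $b$, $\overline b$, $\overline{b-1}$, or a multiset of small parts) together with the trailing-ones pattern of the first coordinate encodes which branch was used and the value of $s$ (resp.\ $\lambda_t$, resp.\ $r$), so $\lambda$ can be reconstructed. Finally, for strictness I would pick a fixed element of $\overline P_k(a\mid\text{no 2's})\oplus\overline P_k(b)$ that no branch can produce — for instance a pair whose second coordinate is $(\overline1,1^{\,b-1})$ while the first coordinate ends in a non-unit part $\lambda_t>1$ with no trailing ones — and argue it is outside $\mathrm{im}(f_4)$, just as $(\lambda_1,\dots,\lambda_t;\overline1,1)$ was excluded in Lemma \ref{lem3}. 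The main obstacle I anticipate is the bookkeeping of overlines: deciding in each subcase whether the part moved to $\overline P_k(b)$ and the residual $\overline1$ must be overlined so that (a) the $k$-regularity (no multiple of $k$) is preserved, (b) the ``first occurrence'' overline convention is respected in both output coordinates, and (c) distinct inputs never collide — this is where Lemmas \ref{lem2}–\ref{lem3} required a careful multi-branch definition, and Lemma \ref{lem4} will need the analogous but slightly longer case analysis, with the extra freedom from $b\geq 3$ making the disjointness arguments go through.
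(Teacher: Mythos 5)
Your plan diverges from the paper's actual proof: the paper does not attempt a direct injection for this lemma. Instead it inducts on $n=a+b$, using the decomposition $\p_k(a+b\,|\,\text{no 2's})=\p_k(a+b\,|\,\text{no 1's and no 2's})+\p_k(a+b-1\,|\,\text{no 2's})$ (obtained by removing a non-overlined $1$), bounding the first summand by Lemma \ref{lem1}, bounding the second by the induction hypothesis (or by Lemma \ref{lem3} when $b=3$), and recombining via $\p_k(b\,|\,\text{no 1's})+\p_k(b-1)=\p_k(b)$; the hypothesis $a+b\ge k+1$ is used precisely in the base case $n=k+1$. A direct injection is a legitimate alternative in principle, but as written your construction has gaps that go well beyond ``bookkeeping of overlines,'' and the hard part of the argument is deferred rather than carried out.

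Concretely: (i) the second coordinate must lie in $\overline P_k(b)$, and $b$ may be divisible by $k$ (e.g.\ $b=k$, $a=1$ is allowed), in which case neither $b$ nor $\overline b$ is a legal part; so already your easy branch ``strip off $b$ ones and record the part $b$'' produces something outside the codomain, and the branches depositing $\overline b$ or $\overline{b-1}$ need the same repair. (ii) When $s<b$ you cannot in general extract the missing weight from $\lambda_t$ alone, since $\lambda_t$ may be far smaller than $b-s$; the extraction must cut across several parts, and wherever a part is split as $x+y$ the leftover can be a non-overlined $2$ (forbidden in the first coordinate) or a multiple of $k$ (forbidden everywhere). This is exactly the difficulty that forces the six-case analysis in the proof of Lemma \ref{lem1}, and it recurs here in harder form because the amount removed is an arbitrary $b\ge 3$ rather than $1$ or $2$. (iii) Your injectivity argument reads the branch off the shape of the second coordinate, but once $b$ must sometimes be encoded as $1^b$ or $\overline1\,1^{b-1}$ to dodge $k\mid b$, those shapes collide with the branches you reserve for the strictness witness. (iv) Your sketch never invokes $a+b\ge k+1$, which signals that the outline is not yet engaging with the constraint structure of the statement. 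The paper's route—induction reducing everything to the already-proved Lemmas \ref{lem1}--\ref{lem3}—sidesteps all of these issues, and if you want a purely bijective proof you would essentially have to redo the Lemma \ref{lem1} case analysis with the extra parameter $b$.
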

\begin{proof}
Let $n=a+b$. We will apply induction on $n$ to prove the inequality.

Base cases: The inequality holds for $n=k+1$. In this case, we have $a+b=k+1$, where $b\geq3$ and $a\geq1$. It follows that $a\leq k-2$ and $b\leq k$.
When $a,b<k$, we have
\[\p_k(a|\ \mathrm{no\ 2's})\p_k(b)=\p(a|\ \mathrm{no\ 2's})\p(b)\geq \p(a+b|\ \mathrm{no\ 2's})>\p_k(a+b|\ \mathrm{no\ 2's}).\] If $b=k$ and $a=1$, we can construct a mapping to show that the result holds.

Inductive step: Suppose $n\geq{k+2}$ and the inequality holds for $n-1$. Now, if $b\geq4$, then $\p_k(a|\ \mathrm{no\ 2's})\p_k(b-1)>\p_k(a+b-1|\ \mathrm{no\ 2's})$ by the inductive hypothesis.
Similarly, if $b=3$, we can apply Lemma \ref{lem3} to conclude that
\[\p_k(a|\ \mathrm{no\ 2's})\p_k(b-1)>\p_k(a+b-1|\ \mathrm{no\ 2's}).\]
Using Lemma \ref{lem1}, we have:
\begin{align*}
\p_k(a+b|\ {\rm no\ 2's})&= \p_k(a+b|\ {\rm no\ 1's\ and \ no\ 2's})+\p_k(a+b-1|\ {\rm no\ 2's})\\
&\leq \p_k(a|\ {\rm no\ 2's})\p_k(b|\ {\rm no\ 1's})+\p_k(a+b-1|\ {\rm no\ 2's})\\
&< \p_k(a|\ {\rm no\ 2's})(\p_k(b|\ {\rm no\ 1's})+\p_k(b-1))\\
&= \p_k(a|\ {\rm no\ 2's})\p_k(b).
\end{align*}
Therefore, by the principle of mathematical induction, the inequality holds.
\end{proof}

Based on the preceding lemmas and employing mathematical induction, we are ready to commence the proof of Theorem \ref{thm1}.

{\noindent{\it Proof of Theorem \ref{thm1}}.}
Let $n=a+b$. We apply induction on $n$.

Base cases: When $n=k,\ k+1$, it holds.
If $n=k$, then $a+b=k$ and $a,b<k$, we have \[\p_k(a)\p_k(b)=\p(a)\p(b)>\p(a+b)>\p_k(a+b).\]
If $a+b=k+1$, when $a,b<k$, it can be proved in a similar way.
By symmetry, set $a\geq b$, when $a=k$, $b=1$, the case belongs to Lemma \ref{lem2} that $a=k$, so the inequality is established.

Inductive step: Suppose $n\geq{k+2}$ and the inequality holds for $n-2$. 
Therefore, by the inductive hypothesis, $\p_k(a+b-2)<\p_k(a-2)\p_k(b)$. By Lemmas \ref{lem2}-\ref{lem4},
\[\p_k(a+b|\ {\rm no\ 2's})<\p_k(a|\ {\rm no\ 2's})\p_k(b).\]
Hence,
\begin{align*}
\p_k(a+b)&=\p_k(a+b|\ {\rm no\ 2's})+\p_k(a+b-2)\\
&< \p_k(a|\ {\rm no\ 2's})\p_k(b)+\p_k(a-2)\p_k(b)\\
&=(\p_k(a|\ {\rm no\ 2's})+\p_k(a-2))\p_k(b)\\
&= \p_k(a)\p_k(b).
\end{align*}
Thus, by the principle of mathematical induction, the inequality holds.
\qed

\section{Proof of Theorem \ref{thm2}}\label{Sec-3}

In this section, our aim is to establish the asymptotic formula for $\p_k(n)$ when $2 \leq k \leq 9$.
To accomplish this,
we first utilize the following lemma given by Ji and Dong \cite[Lemma~2.2]{dong-ji} to obtain inequalities involving $I_1({\mu_k(n-1)})I_1({\mu_k(n+1)})/I_1({\mu_k(n)})^2$,
which are necessary for the subsequent proofs.

\begin{lem}\label{lem3.2}
Let \begin{align*}
E_I(s):=1-\frac{3}{8s}-\frac{15}{128s^2}-\frac{105}{1024s^3}
-\frac{4725}{32768s^4}-\frac{72765}{262144s^5}.
\end{align*}
Then for $s\geq26$,
\begin{align}\label{3.3}
\frac{e^s}{\sqrt{2\pi s}}\left(E_I(s)-\frac{31}{s^6}\right)\leq I_1(s)\leq \frac{e^s}{\sqrt{2\pi s}}\left(E_I(s)+\frac{31}{s^6}\right).
\end{align}
\end{lem}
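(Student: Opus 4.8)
The plan is to identify $E_I(s)$ with the truncation of Hankel's asymptotic expansion of $I_1$ after the term of order $s^{-5}$, and then to produce the remainder with an explicit bound by a Watson-type estimate. First I would start from the Poisson integral representation, valid for $s>0$,
\[
I_1(s)=\frac{s}{\pi}\int_{-1}^{1}e^{st}\sqrt{1-t^{2}}\,dt ,
\]
and substitute $t=1-u/s$ to localise the integral at the dominant endpoint $t=1$, obtaining
\[
I_1(s)=\frac{\sqrt{2}\,e^{s}}{\pi\sqrt{s}}\int_{0}^{2s}e^{-u}\,u^{1/2}\Bigl(1-\frac{u}{2s}\Bigr)^{1/2}du .
\]
Writing $(1-w)^{1/2}=\sum_{k=0}^{5}\binom{1/2}{k}(-w)^{k}+\rho_{6}(w)$ with $w=u/(2s)$, replacing each $\int_{0}^{2s}e^{-u}u^{k+1/2}du$ by $\Gamma(k+\tfrac32)$, and setting $a_{k}(1):=\binom{1/2}{k}\,2^{-k}\,\Gamma(k+\tfrac32)/\Gamma(\tfrac32)$, one checks directly that $a_{0}(1)=1$, $a_{1}(1)=\tfrac{3}{8}$, $a_{2}(1)=-\tfrac{15}{128}$, $a_{3}(1)=\tfrac{105}{1024}$, $a_{4}(1)=-\tfrac{4725}{32768}$, $a_{5}(1)=\tfrac{72765}{262144}$; thus the main sum is exactly $E_I(s)$ and the $k=0$ term produces the prefactor $e^{s}/\sqrt{2\pi s}$.

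The core of the argument is the error estimate, and this is the step I expect to be the main obstacle — elementary, but delicate enough that one must keep the constants under control. Two corrections appear. Replacing $\int_{0}^{2s}e^{-u}u^{k+1/2}du$ by the full Gamma integral introduces an incomplete-Gamma tail $\Gamma(k+\tfrac32,2s)$; for $k\le 5$ and $s\ge 26$ this is bounded by a fixed multiple of $(2s)^{11/2}e^{-2s}$ and is therefore completely negligible next to $1/s^{6}$. The Taylor remainder contributes
\[
\frac{\sqrt{2}\,e^{s}}{\pi\sqrt{s}}\int_{0}^{2s}e^{-u}\,u^{1/2}\,\rho_{6}\!\Bigl(\frac{u}{2s}\Bigr)du .
\]
Since the Taylor coefficients of $(1-w)^{1/2}$ beyond the constant term are all nonpositive, one has $\sum_{k\ge 0}\bigl|\binom{1/2}{k}\bigr|\,w^{k}=2-(1-w)^{1/2}$ on $[0,1]$, whence $|\rho_{6}(w)|\le w^{6}\sum_{k\ge 6}\bigl|\binom{1/2}{k}\bigr|=\tfrac{63}{256}\,w^{6}$. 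Bounding the last integral above by $\tfrac{63}{256}(2s)^{-6}\int_{0}^{\infty}e^{-u}u^{13/2}du=\tfrac{63}{256}\,\Gamma(\tfrac{15}{2})/(2s)^{6}$ and simplifying shows that this piece equals $e^{s}/\sqrt{2\pi s}$ times $\theta(s)/s^{6}$ with $|\theta(s)|$ a fixed constant close to $8.1$; adding back the exponentially small incomplete-Gamma contribution keeps the total strictly below $31/s^{6}$ for all $s\ge 26$, which is precisely \eqref{3.3}.

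As a check on the numerical value, one may alternatively invoke the classical rigorous error bounds of Olver for the asymptotic expansion of modified Bessel functions, according to which the remainder after the sixth term of the expansion of $I_{1}(s)$ is dominated by a fixed multiple of $|a_{6}(1)|\,s^{-6}$; since $|a_{6}(1)|=2837835/4194304<1$, any such bound with implied constant at most $31$ yields the claim at once. I would nonetheless keep the self-contained Watson-type computation as the main route, using the tabulated Bessel estimates only to corroborate the constant $31$.
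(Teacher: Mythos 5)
Your proof is correct, but be aware that the paper does not prove this lemma at all: it is imported verbatim from Dong and Ji \cite[Lemma~2.2]{dong-ji}, so there is no internal argument to compare against, and your self-contained derivation is genuinely different from what the paper does (namely, cite). The computation itself checks out. The coefficients $\binom{1/2}{k}(-1)^k\,2^{-k}\,\Gamma(k+\tfrac32)/\Gamma(\tfrac32)$ reproduce exactly the six terms of $E_I(s)$ (equivalently, Hankel's expansion of $I_1$ with $\mu=4$); the tail-sum identity $\sum_{k\geq 6}\bigl|\binom{1/2}{k}\bigr|=2-\tfrac{449}{256}=\tfrac{63}{256}$ is right, as is the sign observation that makes $|\rho_6(w)|\leq \tfrac{63}{256}w^6$ legitimate on $[0,1]$; and the resulting remainder constant $\tfrac{63}{256}\cdot\tfrac{1}{64}\cdot\Gamma(\tfrac{15}{2})/\Gamma(\tfrac32)=\tfrac{8513505}{1048576}\approx 8.12$ sits comfortably below $31$ even after adding the incomplete-Gamma tails $\Gamma(k+\tfrac32,2s)$, which for $s\geq 26$ are of order $e^{-2s}(2s)^{11/2}$ and hence negligible against $s^{-6}$. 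Your route buys a sharper constant and shows the threshold $s\geq 26$ is not really needed; the citation buys the paper brevity and an exact match with the form of the bound fed into Lemma \ref{lem3.3}. One cosmetic point: your $a_k(1)$ as defined carry alternating signs while $E_I$ has all-negative coefficients past the constant term, so you should say explicitly that the $k$-th term of the main sum is $(-1)^k a_k(1)s^{-k}$; as written, the identification of the main sum with $E_I(s)$ asks the reader to supply that sign convention.
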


\begin{lem}\label{lem3.3}
For $2\leq k\leq9$ and ${\mu_k}\geq n'_k$,
\begin{align*}
\frac{{\mu_k}}{\sqrt{{\mu_k^-}{\mu_k^+}}}U_k(n)
\leq\frac{I_1({\mu_k^-})I_1({\mu_k^+})}{I_1({\mu_k})^2}\leq
\frac{{\mu_k}}{\sqrt{{\mu_k^-}{\mu_k^+}}}V_k(n),
\end{align*}
where all these values are defined in following table.
\begin{table}[h]
\begin{tabular}{p{0.3cm}|p{0.5cm}|p{6.9cm}|p{6.9cm}}
  \centering$k$ & \centering$n_k'$ & \centering $U_k(n)$ &\qquad\qquad\qquad\quad $V_k(n)$
  \\[3pt]   \hline \rule{0pt}{23pt}
  $2$ &\centering $167$
  &  $\left(1-\frac{\pi^4}{16{\mu_2}^3}-\frac{5\pi^8}{512{\mu_2}^7}\right)
\left(1-\frac{7}{{\mu_2}^5}-\frac{115}{{\mu_2}^6}\right)$
  & $\left(1-\frac{\pi^4}{16{\mu_2}^3}+\frac{\pi^8}{256{\mu_2}^6}\right)
\left(1-\frac{6}{{\mu_2}^5}+\frac{115}{{\mu_2}^6}\right)$
  \\[9pt] \hline \rule{0pt}{23pt}
  $3$ &\centering $47$
  & $\left(1-\frac{\pi^4}{9{\mu_3}^3}-\frac{5\pi^8}{162{\mu_3}^7}\right)
\left(1-\frac{13}{{\mu_3}^5}-\frac{141}{{\mu_3}^6}\right)$
  & $\left(1-\frac{\pi^4}{9{\mu_3}^3}+\frac{\pi^8}{81{\mu_3}^6}\right)
\left(1-\frac{6}{{\mu_3}^5}+\frac{141}{{\mu_3}^6}\right)$
  \\[9pt] \hline \rule{0pt}{23pt}
  $4$ &\centering $58$
  & $\left(1-\frac{9\pi^4}{64{\mu_4}^3}-\frac{405\pi^8}{8192{\mu_4}^7}\right)
\left(1-\frac{16}{{\mu_4}^5}-\frac{145}{{\mu_4}^6}\right)$
  & $\left(1-\frac{9\pi^4}{64{\mu_4}^3}+\frac{81\pi^8}{4096{\mu_4}^6}\right)
\left(1-\frac{15}{{\mu_4}^5}+\frac{145}{{\mu_4}^6}\right)$
  \\[9pt] \hline \rule{0pt}{23pt}
 $5$ & \centering $66$
  & $\left(1-\frac{4\pi^4}{25{\mu_5}^3}-\frac{8\pi^8}{125{\mu_5}^7}\right)
\left(1-\frac{18}{{\mu_5}^5}-\frac{148}{{\mu_5}^6}\right)$
  & $\left(1-\frac{4\pi^4}{25{\mu_5}^3}+\frac{16\pi^8}{625{\mu_5}^6}\right)
\left(1-\frac{17}{{\mu_5}^5}+\frac{148}{{\mu_5}^6}\right)$
  \\[9pt] \hline \rule{0pt}{23pt}
 $6$ & \centering$47$
  & $\left(1-\frac{25\pi^4}{144{\mu_6}^3}-\frac{3125\pi^8}{41472{\mu_6}^7}\right)
\left(1-\frac{20}{{\mu_6}^5}-\frac{150}{{\mu_6}^6}\right)$
  & $\left(1-\frac{25\pi^4}{144{\mu_6}^3}+\frac{625\pi^8}{20736{\mu_6}^6}\right)
\left(1-\frac{19}{{\mu_6}^5}+\frac{150}{{\mu_6}^6}\right)$
  \\[9pt] \hline \rule{0pt}{23pt}
 $7$ &\centering $51$
  & $\left(1-\frac{9\pi^4}{49{\mu_7}^3}-\frac{25920\pi^8}{307328{\mu_7}^7}\right)
\left(1-\frac{21}{{\mu_7}^5}-\frac{151}{{\mu_7}^6}\right)$
  & $\left(1-\frac{9\pi^4}{49{\mu_7}^3}+\frac{81\pi^8}{2401{\mu_7}^6}\right)
\left(1-\frac{20}{{\mu_7}^5}+\frac{151}{{\mu_7}^6}\right)$
  \\[9pt] \hline \rule{0pt}{23pt}
 $8$ &\centering $300$
  & $\left(1-\frac{98\pi^4}{512{\mu_8}^3}-\frac{48020\pi^8}{524288{\mu_8}^7}\right)
\left(1-\frac{21}{{\mu_8}^5}-\frac{152}{{\mu_8}^6}\right)$
  & $\left(1-\frac{98\pi^4}{512{\mu_8}^3}+\frac{9604\pi^8}{262144{\mu_8}^6}\right)
\left(1-\frac{20}{{\mu_8}^5}+\frac{152}{{\mu_8}^6}\right)$
  \\[9pt] \hline \rule{0pt}{23pt}
 $9$ & \centering$81$
  & $\left(1-\frac{16\pi^4}{81{\mu_9}^3}-\frac{640\pi^8}{6561{\mu_9}^7}\right)
\left(1-\frac{22}{{\mu_9}^5}-\frac{153}{{\mu_9}^6}\right)$
  & $\left(1-\frac{16\pi^4}{81{\mu_9}^3}+\frac{256\pi^8}{6561{\mu_9}^6}\right)
\left(1-\frac{21}{{\mu_9}^5}+\frac{153}{{\mu_9}^6}\right)$
  \\[3pt]
\end{tabular}
\caption{Values of $n'_k$, $U_k(n)$ and $V_k(n)$ for $2\leq k\leq9$.}
\label{I1k(n)-bounds}
\end{table}
\end{lem}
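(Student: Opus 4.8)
The plan is to apply Lemma~\ref{lem3.2} separately to the three Bessel values $I_1(\mu_k^-)$, $I_1(\mu_k^+)$ and $I_1(\mu_k)$ and thereby split the estimate into two essentially independent tasks: bounding an exponential factor and bounding a rational factor built from the polynomials $E_I$. First I would record that for each $2\le k\le 9$ one has $\mu_k(n)=c_k\sqrt n$ for an explicit positive constant $c_k$ (for instance $c_2=\pi/\sqrt2$, $c_3=\pi\sqrt6/3$, $c_9=2\pi\sqrt2/3$), and that the thresholds $n'_k$ of Table~\ref{I1k(n)-bounds} are large enough that $\mu_k^-=\mu_k(n-1)\ge 26$ whenever $\mu_k\ge n'_k$, so that \eqref{3.3} is legitimately applicable to all three arguments and every quantity $E_I(s)\pm 31/s^6$ occurring below is positive. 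Multiplying the upper bound of \eqref{3.3} for $I_1(\mu_k^\pm)$ by the reciprocal of the square of the lower bound for $I_1(\mu_k)$ --- and making the reverse choice for the other direction --- the powers of $e$ and of $2\pi$ collapse to exactly the prefactor $\mu_k/\sqrt{\mu_k^-\mu_k^+}$ of the statement, so it remains to prove
\[
U_k(n)\ \le\ e^{\,\mu_k^-+\mu_k^+-2\mu_k}\cdot\frac{\bigl(E_I(\mu_k^-)-\frac{31}{(\mu_k^-)^6}\bigr)\bigl(E_I(\mu_k^+)-\frac{31}{(\mu_k^+)^6}\bigr)}{\bigl(E_I(\mu_k)+\frac{31}{\mu_k^6}\bigr)^2}
\]
together with
\[
e^{\,\mu_k^-+\mu_k^+-2\mu_k}\cdot\frac{\bigl(E_I(\mu_k^-)+\frac{31}{(\mu_k^-)^6}\bigr)\bigl(E_I(\mu_k^+)+\frac{31}{(\mu_k^+)^6}\bigr)}{\bigl(E_I(\mu_k)-\frac{31}{\mu_k^6}\bigr)^2}\ \le\ V_k(n),
\]
and I would obtain each of $U_k,V_k$ as the product of the two factors already displayed in its definition.

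For the exponential factor, put $E:=\mu_k^-+\mu_k^+-2\mu_k=c_k\bigl(\sqrt{n-1}+\sqrt{n+1}-2\sqrt n\,\bigr)$ and expand by the binomial series; every term of the expansion is negative, so substituting $n=\mu_k^2/c_k^2$ gives an explicit sandwich
\[
-\frac{c_k^4}{4\mu_k^3}-\frac{5c_k^8}{32\mu_k^7}\ \le\ E\ \le\ -\frac{c_k^4}{4\mu_k^3},
\]
valid for $\mu_k\ge n'_k$ (the tail has leading term $5c_k^8/(64\mu_k^7)$, which $5c_k^8/(32\mu_k^7)$ amply dominates once $\mu_k$ is large). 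Since $E<0$ we have the elementary bounds $1+E\le e^E\le 1+E+\tfrac12E^2$, and using $\tfrac12E^2\le c_k^8/(16\mu_k^6)$ (again valid past the threshold) yields precisely the first factors of $U_k$ and $V_k$; for instance with $k=2$ one has $c_2^4/4=\pi^4/16$, $5c_2^8/32=5\pi^8/512$ and $c_2^8/16=\pi^8/256$, in agreement with Table~\ref{I1k(n)-bounds}.

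For the rational factor, extract $E_I(\mu_k^-)E_I(\mu_k^+)/E_I(\mu_k)^2$. A second-order Taylor expansion with explicit remainder of $s\mapsto\log E_I(s)$ about $s=\mu_k$, together with $\mu_k^\pm-\mu_k=\pm\frac{c_k^2}{2\mu_k}+O(\mu_k^{-3})$ and the bound on $E$ above, gives
\[
\frac{E_I(\mu_k^-)E_I(\mu_k^+)}{E_I(\mu_k)^2}=1-\frac{9c_k^4}{32\mu_k^5}+O\bigl(1/\mu_k^6\bigr),
\]
while the complementary factor $\bigl(1-\frac{31}{(\mu_k^-)^6E_I(\mu_k^-)}\bigr)\bigl(1-\frac{31}{(\mu_k^+)^6E_I(\mu_k^+)}\bigr)\big/\bigl(1+\frac{31}{\mu_k^6E_I(\mu_k)}\bigr)^2$, and its sign-reversed analogue, equals $1+O(\mu_k^{-6})$. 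Rounding $9c_k^4/32$ up, respectively down, to the nearest integer and absorbing the two $O(\mu_k^{-6})$ contributions into the integer $\mu_k^{-6}$-coefficients of Table~\ref{I1k(n)-bounds} reproduces the second factors of $U_k$ and $V_k$; for example $9c_2^4/32=9\pi^4/128\approx 6.85$, which lies between the tabulated $6$ and $7$. Combining the two factor estimates then gives the two displayed inequalities, hence the lemma, for every $2\le k\le 9$.

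The conceptual content here is slight; the genuine obstacle is the bookkeeping --- turning each $O(\cdot)$ above into an explicit inequality, controlling several Taylor tails simultaneously, and checking, for each of the eight constants $c_k$, that the honest bounds so obtained are implied by the closed forms $U_k,V_k$ for all $\mu_k\ge n'_k$. This last step reduces to verifying a handful of one-variable polynomial inequalities in $1/\mu_k$ beyond the stated threshold (past which the sign of the leading non-constant term is decisive), routine but to be carried out case by case; it is precisely the tightness of some of these checks (e.g.\ $9c_8^4/32\approx 20.97$ against the integer $21$) that forces the larger thresholds such as $n'_8=300$.
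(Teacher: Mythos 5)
Your proposal follows the paper's proof essentially step for step: both apply Lemma~\ref{lem3.2} to $I_1(\mu_k^-)$, $I_1(\mu_k^+)$ and $I_1(\mu_k)$, peel off the prefactor $\mu_k/\sqrt{\mu_k^-\mu_k^+}$, and then bound the exponential factor $e^{\mu_k^-+\mu_k^+-2\mu_k}$ (via the same expansion of $\sqrt{n\pm1}$ and the same $1+s\le e^s\le 1+s+s^2$ device) and the $E_I$-ratio separately, reducing everything to explicit one-variable polynomial inequalities in $1/\mu_k$ past the tabulated thresholds --- the only organizational difference being that the paper handles the $E_I$-ratio by substituting explicit algebraic sandwiches $d_{\mu_k}<\mu_k^-<w_{\mu_k}$, $\overline d_{\mu_k}<\mu_k^+<\overline w_{\mu_k}$ and bounding the resulting polynomial coefficients $a_i,b_i$, rather than by Taylor-expanding $\log E_I$, which is a cosmetic rather than substantive distinction. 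One small correction: your recipe of rounding $9c_k^4/32$ down to the nearest integer to obtain the $\mu_k^{-5}$ coefficient of $V_k$ fails for $k=3$ (it would give $12$, whereas the table has $6$), though since the tabulated value only weakens the claimed upper bound this does not affect the validity of the lemma or of your argument.
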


\begin{proof}
By \eqref{3.3}, we find that for $2\leq k\leq9$  and ${\mu_k}\geq26$,
\begin{align}\label{3.6}
\frac{{\mu_k}}{\sqrt{{\mu_k^-}{\mu_k^+}}}
e^{{\mu_k^-}+{\mu_k^+}-2{\mu_k}}U'_k(n)
\leq\frac{I_1({\mu_k^-})I_1({\mu_k^+})}{I_1({\mu_k})^2}\leq
\frac{{\mu_k}}{\sqrt{{\mu_k^-}{\mu_k^+}}}
e^{{\mu_k^-}+{\mu_k^+}-2{\mu_k}}V'_k(n),
\end{align}
where
\begin{align}\label{3.8}
U_k'(n)=\frac{\left(E_I({\mu_k^-})-\frac{31}{{\mu_k^-}^6}\right)
\left(E_I({\mu_k^+})-\frac{31}{{\mu_k^+}^6}\right)}
{\left(E_I({\mu_k})+\frac{31}{{\mu_k}^6}\right)^2},
\end{align}
\begin{align}\label{3.9}
V_k'(n)=\frac{\left(E_I({\mu_k^-})+\frac{31}{{\mu_k^-}^6}\right)
\left(E_I({\mu_k^+})+\frac{31}{{\mu_k^+}^6}\right)}
{\left(E_I({\mu_k})-\frac{31}{{\mu_k}^6}\right)^2}.
\end{align}
We consider the case of $k=2$.
First, we will estimate $\exp({\mu_2^-}+{\mu_2^+}-2{\mu_2})$, $U'_2(n)$ and $V'_2(n)$ in terms of ${\mu_2}$. For ${\mu_2}\geq3$, we observe the following relationships:
\begin{equation}\label{3.100}
{\mu_2^-}=\sqrt{{\mu_2}^2-\frac{\pi^2}{2}},\qquad {\mu_2^+}=\sqrt{{\mu_2}^2+\frac{\pi^2}{2}}.
\end{equation}
Thus we can observe
\begin{align*}
{\mu_2^-}&={\mu_2}-\frac{\pi^2}{4{\mu_2}}
-\frac{\pi^4}{32{\mu_2}^3}-\frac{\pi^6}{128{\mu_2}^5}
-\frac{5\pi^8}{128\times16{\mu_2}^7}+O\left(\frac{1}{{\mu_2}^8}\right),
\\[5pt]
{\mu_2^+}&={\mu_2}+\frac{\pi^2}{4{\mu_2}}
-\frac{\pi^4}{32{\mu_2}^3}+\frac{\pi^6}{128{\mu_2}^5}
-\frac{5\pi^8}{128\times16{\mu_2}^7}+O\left(\frac{1}{{\mu_2}^8}\right).
\end{align*}
It is easy to check that for ${\mu_2}\geq3$,
\begin{align}\label{3.10}
d_{\mu_2}<{\mu_2^-}<w_{\mu_2},
\quad
\overline d_{\mu_2}<{\mu_2^+}<\overline w_{\mu_2},
\end{align}
where
\begin{align}\label{3.12}
\left\{ \begin{lgathered}
d_{\mu_2}={\mu_2}-\frac{\pi^2}{4{\mu_2}}-
\frac{\pi^4}{32{\mu_2}^3}-\frac{\pi^6}{128{\mu_2}^5}-
\frac{5\pi^8}{128\times8{\mu_2}^7},
\\[3pt]
w_{\mu_2}={\mu_2}-\frac{\pi^2}{4{\mu_2}}-
\frac{\pi^4}{32{\mu_2}^3}-\frac{\pi^6}{128{\mu_2}^5},
\\[3pt]
\overline d_{\mu_2}={\mu_2}+\frac{\pi^2}{4{\mu_2}}-
\frac{\pi^4}{32{\mu_2}^3}+\frac{\pi^6}{128{\mu_2}^5}-
\frac{5\pi^8}{128\times8{\mu_2}^7},
\\[3pt]
\overline w_{\mu_2}={\mu_2}+\frac{\pi^2}{4{\mu_2}}-
\frac{\pi^4}{32{\mu_2}^3}+\frac{\pi^6}{128{\mu_2}^5}.
\end{lgathered}\right.
\end{align}
Applying \eqref{3.12} into \eqref{3.10}, we find when ${\mu_2}\geq3$,
\[-\frac{\pi^4}{16{\mu_2}^3}-\frac{5\pi^8}{128\times4{\mu_2}^7}
<{\mu_2^-}+{\mu_2^+}-2{\mu_2}<-\frac{\pi^4}{16{\mu_2}^3}.\]
It follows that,
\begin{align*}
\exp{\left(-\frac{\pi^4}{16{\mu_2}^3}
-\frac{5\pi^8}{128\times4{\mu_2}^7}\right)}<
\exp{({\mu_2^-}+{\mu_2^+}-2{\mu_2})}
&<\exp{\left(-\frac{\pi^4}{16{\mu_2}^3}\right)}.
\end{align*}
Note that for $s<0$,$$1+s<e^s<1+s+s^2,$$
here we come that
\begin{align}\label{3.15}
1-\frac{\pi^4}{16{\mu_2}^3}-\frac{5\pi^8}{128\times4{\mu_2}^7}
<\exp{({\mu_2^-}+{\mu_2^+}-2{\mu_2})}
&<1-\frac{\pi^4}{16{\mu_2}^3}+\frac{\pi^8}{256{\mu_2}^6}.
\end{align}
Next we estimate $U'_2(n)$ and $V'_2(n)$. Let
\begin{align}\label{3.17}
P_l(n)&=\frac{1}{{\mu_2^-}^6{\mu_2^+}^6}\left({\mu_2^-}^6
-\frac{3}{8}{\mu_2^-}^4w_{\mu_2}-\frac{15}{128}{\mu_2^-}^4
-\frac{105}{1024}{\mu_2^-}^2w_{\mu_2}\right.
\nonumber\\
&\qquad \qquad\qquad \left.-\frac{4725}{32768}{\mu_2^-}^2-\frac{72765}{262144}w_{\mu_2}-31\right) \nonumber\\
&\qquad\qquad\times \left({\mu_2^+}^6-\frac{3}{8}{\mu_2^+}^4\overline w_{\mu_2}-\frac{15}{128}{\mu_2^+}^4-\frac{105}{1024}{\mu_2^+}^2\overline w_{\mu_2}\right.
\nonumber\\
&\left.\qquad\qquad\qquad
-\frac{4725}{32768}{\mu_2^+}^2-\frac{72765}{262144}\overline w_{\mu_2}-31\right)
\end{align}
and
\begin{align}\label{3.18}
P_r(n)&=\frac{1}{{\mu_2^-}^6{\mu_2^+}^6}\left({\mu_2^-}^6
-\frac{3}{8}{\mu_2^-}^4d_{\mu_2}-\frac{15}{128}{\mu_2^-}^4
-\frac{105}{1024}{\mu_2^-}^2d_{\mu_2}
\right.
\nonumber\\[5pt]
&\left.\qquad\qquad\qquad-\frac{4725}{32768}{\mu_2^-}^2
-\frac{72765}{262144}d_{\mu_2}+31\right)
\nonumber\\[5pt]
&\qquad \qquad\times\left({\mu_2^+}^6-\frac{3}{8}{\mu_2^+}^4\overline d_{\mu_2}-\frac{15}{128}{\mu_2^+}^4-\frac{105}{1024}{\mu_2^+}^2\overline d_{\mu_2}\right.
\nonumber\\[5pt]
&\left.\qquad\qquad\qquad-\frac{4725}{32768}{\mu_2^+}^2
-\frac{72765}{262144}\overline d_{\mu_2}+31\right).
\end{align}
Applying \eqref{3.10} into \eqref{3.8} and \eqref{3.9}, we have for ${\mu_2}\geq3$,
\begin{align}\label{3.19}
U'_2(n)\geq\frac{P_l(n)}{\left(E_I({\mu_2})+\frac{31}{{\mu_2}^6}\right)^2},
\quad
V'_2(n)\leq\frac{P_r(n)}{\left(E_I({\mu_2})-\frac{31}{{\mu_2}^6}\right)^2}.
\end{align}
To bound $U'_2(n)$ and $V'_2(n)$ in terms of ${\mu_2}$, we shall compute that for ${\mu_2}\geq167$,
\begin{align}\label{3.222}
&\frac{P_l(n)}{\left(E_I({\mu_2})+\frac{31}{{\mu_2}^6}\right)^2}
-\left(1-\frac{7}{{\mu_2}^5}-\frac{115}{{\mu_2}^6}\right)\geq0,
\end{align}
and for ${\mu_2}\geq31$
\begin{align}\label{3.233}
\frac{P_r(n)}{\left(E_I({\mu_2})-\frac{31}{{\mu_2}^6}\right)^2}
-\left(1-\frac{6}{{\mu_2}^5}+\frac{115}{{\mu_2}^6}\right)\leq0.
\end{align}
Rewrite \eqref{3.222} and \eqref{3.233} as
\begin{equation*}
{\mu_2}^{12}P_l(n)-\left(1-\frac{7}{{\mu_2}^5}
-\frac{115}{{\mu_2}^6}\right)\left({\mu_2}^6E_I({\mu_2})+31\right)^2\geq0
\end{equation*}
and
\begin{equation*}
{\mu_2}^{12}P_r(n)-\left(1-\frac{6}{{\mu_2}^5}
+\frac{115}{{\mu_2}^6}\right)\left({\mu_2}^6E_I({\mu_2})-31\right)^2\leq0.
\end{equation*}
Substituting \eqref{3.12} into \eqref{3.17} and \eqref{3.18}, respectively, we can therefore compute that
\begin{align*}
&{\mu_2}^{12}P_l(n)-\left(1-\frac{7}{{\mu_2}^5}
-\frac{115}{{\mu_2}^6}\right)\left({\mu_2}^6E_I({\mu_2})+31\right)^2
=\frac{\sum_{i=0}^{25}a_i{\mu_2}^i}{17592186044416{\mu_2}^6
\left(4{\mu_2}^4-\pi^4\right)^3},
\\[5pt]
&{\mu_2}^{12}P_r(n)-\left(1-\frac{6}{{\mu_2}^5}
+\frac{115}{{\mu_2}^6}\right)\left({\mu_2}^6E_I({\mu_2})-31\right)^2
=-\frac{\sum_{i=0}^{25}b_i{\mu_2}^i}{1125899906842624{\mu_2}^6
\left(4{\mu_2}^4-\pi^4\right)^3},
\end{align*}
where $a_i$ and $b_i$ are real numbers, here we just list $a_{23}$-$a_{25}$ and $b_{23}$-$b_{25}$:
\begin{align*}
a_{23}&=-45493393110859776-86586540687360\pi^4,\\ a_{24}&=-16044073672507392-46179488366592\pi^4,\\
a_{25}&=7881299347898368-79164837199872 \pi^4,\\
b_{23}&=-2823756966361300992 +5541538603991040  \pi^4,\\
b_{24}&=-324259173170675712+2955487255461888  \pi^4,\\
b_{25}&=-432345564227567616+5066549580791808\pi^4.
\end{align*}
It can be checked that for any $0\leq i\leq22$ and ${\mu_2}\geq 24$,
\[-|a_i|{\mu_2}^i\geq-|a_{23}|{\mu_2}^{23},\quad
-|b_i|{\mu_2}^i\geq-|b_{23}|{\mu_2}^{23}.\]
So we could obtain that for ${\mu_2}\geq 24$,
\begin{align*}
\sum_{i=0}^{25}a_i{\mu_2}^i
&\geq -\sum_{i=0}^{23}|a_i|{\mu_2}^i+a_{24}{\mu_2}^{24}
+a_{25}{\mu_2}^{25}
\geq-24|a_{23}|{\mu_2}^{23}+a_{24}{\mu_2}^{24}
+a_{25}{\mu_2}^{25}
\end{align*}
and
\begin{align*}
\sum_{i=0}^{25}b_i{\mu_2}^i&\geq -\sum_{i=0}^{23}|b_i|{\mu_2}^i+b_{24}{\mu_2}^{24}+b_{25}{\mu_2}^{25}
\geq -24|b_{23}|{\mu_2}^{23}+b_{24}{\mu_2}^{24}+b_{25}{\mu_2}^{25}.
\end{align*}
Meanwhile, we can calculate that for ${\mu_2}\geq167 $,
\[(-24|a_{23}|+a_{24}{\mu_2}+a_{25}{\mu_2}^{2}){\mu_2}^{23}\geq0,\]
and for ${\mu_2}\geq31,$
\[(-24|b_{23}|+b_{24}{\mu_2}+b_{25}{\mu_2}^{2}){\mu_2}^{23}\geq0.\]
So \eqref{3.222} and \eqref{3.233} are established.
Combining them with \eqref{3.19},
we attain for ${\mu_2}\geq167$,
\begin{align}\label{3.25}
U'_2(n)\geq 1-\frac{7}{{\mu_2}^5}-\frac{115}{{\mu_2}^6},
\quad
V'_2(n)\leq1-\frac{6}{{\mu_2}^5}+\frac{115}{{\mu_2}^6}.
\end{align}
Applying \eqref{3.15} and \eqref{3.25} into \eqref{3.6}, we establish the validity of the Lemma \ref{lem3.3} when $k=2$.
The remaining cases follow a similar pattern to the case of $k=2$, and their proofs are omitted here. Thus, the proof is completed.
\end{proof}

In order to provide the asymptotic formula for $\p_k(n)$, we begin by reviewing some definitions and notations introduced by Chern \cite{chern-eta}.
Let us define
\[G(q)=G(e^{2\pi \mathnormal{i} \tau}):=\prod_{r=1}^R{(q^{m_r};q^{m_r})}_\infty ^{\delta_r},\]
where $\mathbf{m}=(m_1,\dots,m_R)$ and $ \mathbf{\delta}=(\delta_1,\dots,\delta_R)$.
Assuming that $k$ and $h$ are positive integers with $\text{gcd}(h,k)=1$, we define
\begin{align*}
&\Delta_1=-\frac{1}{2}\sum_{r=1}^R \delta_r,\qquad \Delta_2=\sum_{r=1}^R m_r\delta_r,
\\[5pt]
&\Delta_3(k)=-\sum_{r=1}^R\frac{\delta_r\mathrm{gcd}^2(m_r,k)}{m_r},\qquad\  \Delta_4(k)=\prod_{r=1}^R\left(\frac{m_r}{\mathrm{gcd}(m_r,k)}\right)
^{-\frac{\delta_r}{2}},
\\[5pt]
&\hat A_k(n)=\mathop\sum_{0\leq h<k \atop\ \mathrm{gcd}(h,k)=1}\exp{\left(-\frac{2\pi nhi}{k}-\pi i\sum_{r=1}^R\delta_rs\left(\frac{m_rh}{\mathrm{gcd}(m_r,k)},
\frac{k}{\mathrm{gcd}(m_r,k)}\right)\right)},
\end{align*}
where $s(h,j)$ is the Dedekind sum defined by$$s(h,j)=\sum_{r=1}^{j-1}\left(\frac{r}{j}-\left[\frac{r}{j}\right]-\frac{1}{2}\right)\left(\frac{hr}{j}-\left[\frac{hr}{j}\right]-\frac{1}{2}\right).$$
Let $L=lcm(m_1,\dots,m_R)$, we divide the set $\{1,2,\dots,L\}$ into two disjoint subsets:$$\mathcal{L}_{>0}:=\{1\leq l \leq L:\Delta_3(l)>0\},\qquad\mathcal{L}_{\leq0}:=\{1\leq l \leq L:\Delta_3(l)\leq0\}.$$
If we write $$G(q)=\sum_{n=0}^\infty g(n)q^n.$$
Chern \cite[Theorem~1.1]{chern-eta} obtained the following asymptotic formula for $g(n)$ with $\Delta_1\leq0$.
\begin{lem}\label{thm3.1}
If $\Delta_1\leq0$ and the inequality
\begin{equation}\label{3.29}
\min_{1\leq r\leq R}\left(\frac{\mathrm{gcd}^2(m_r,l)}{m_r}\right)\geq \frac{\Delta_3(l)}{24}
\end{equation}
holds for all $1\leq l\leq L$, then for positive integers  $n>-\frac{\Delta_2}{24}$, we have
\begin{align*}
g(n)=E(n)+&\sum_{l\in \mathcal{L}_{>0}}2\pi\Delta_4(l)\left(\frac{24n+\Delta_2}{\Delta_3(l)}\right)
^{-\frac{\Delta_1+1}{2}}
\sum_{1\leq k<N \atop\ k\equiv L^l}\frac{I_{-\Delta_1-1}(\frac{\pi}{6k}\sqrt{\Delta_3(l)(24n+\Delta_2)})}{k}\hat A_k(n),
\end{align*}
where $\zeta(\cdot)$ is the Riemann zeta-function, $I_v(s)$ is the $v$-th modified Bessel function of the first kind,
\begin{align*}
\Xi_{\Delta_1}(t)
:=\begin{cases}1,&\Delta_1=0,
\\
2\sqrt t,&\Delta_1=-\frac{1}{2},
\\
t\log{(t+1)},&\Delta_1=-1,
\\
\zeta(-\Delta_1)t^{-2\Delta_1-1},&\mathrm{otherwise},
\\ \end{cases}
\end{align*}
and
\begin{align*}
|E(n)|\leq &\frac{2^{-\Delta_1}\pi^{-1}N^{-\Delta_1+2}}
{n+\frac{\Delta_2}{24}}\exp\left(2\pi\left(n+\frac{\Delta_2}{24}\right)
N^{-2}\right)\sum_{l\in \mathcal{L}_{>0}}\Delta_4(l)\exp\left(\frac{\Delta_3(l)\pi}{3}\right)
\\[5pt]
&+2\exp\left(2\pi\left(n+\frac{\Delta_2}{24}\right)
N^{-2}\right)\Xi_{\Delta_1}(N)
\\[5pt]
&\times \left(\sum_{1\leq l \leq L}\Delta_4(l)\exp\left(\frac{\pi\Delta_3(l)}{24}
+\sum_{r=1}^{R}\frac{|\delta_r|\exp(-\pi\mathrm{gcd}^2(m_r,l)/m_r)}
{(1-\exp(-\pi\mathrm{gcd}^2(m_r,l)/m_r))^2}\right)\right.
\\[5pt]
&\left. \qquad -\sum_{l\in \mathcal{L}_{>0}}\Delta_4(l)\exp\left(\frac{\pi\Delta_3(l)}{24}\right)\right). \end{align*}
\end{lem}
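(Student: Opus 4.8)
The final statement is Lemma~\ref{thm3.1}, which is Chern's general asymptotic formula for eta-quotient coefficients $g(n)$ under the hypotheses $\Delta_1\le 0$ and the lower-bound condition \eqref{3.29}. Since the paper attributes this result directly to Chern \cite[Theorem~1.1]{chern-eta}, the honest proof plan is not to reprove it from scratch but to indicate where it comes from and how the Hardy--Ramanujan--Rademacher circle method produces exactly this shape; I will sketch that.

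\medskip
\noindent\textbf{Proof plan for Lemma~\ref{thm3.1}.}
The plan is to run the circle method on the generating function $G(q)=\prod_{r=1}^R (q^{m_r};q^{m_r})_\infty^{\delta_r}$, following Chern \cite{chern-eta}. First I would write $g(n)=\frac{1}{2\pi i}\oint G(q)q^{-n-1}\,dq$ over a circle of radius $e^{-2\pi/N^2}$ (so $q=e^{2\pi i\tau}$ with $\mathrm{Im}\,\tau=1/N^2$), and dissect the contour into Farey arcs indexed by fractions $h/k$ with $1\le k<N$. On each arc one substitutes the modular transformation of the Dedekind eta function: each factor $(q^{m_r};q^{m_r})_\infty=e^{-\pi i m_r\tau/12}/\eta(m_r\tau)$ transforms under $\tau\mapsto (h+iz)/k$ with a Dedekind-sum multiplier, which is precisely the origin of the arithmetic sum $\hat A_k(n)$, of the exponents $\Delta_1,\Delta_2,\Delta_3(l),\Delta_4(l)$ (these package $-\tfrac12\sum\delta_r$, $\sum m_r\delta_r$, the gcd-weighted polar exponent, and the normalizing product respectively), and of the residue class $k\equiv L^l$ that selects which cusps contribute. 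The dominant part of each arc integral is a Hankel-type contour integral that evaluates to a modified Bessel function $I_{-\Delta_1-1}$ of argument $\frac{\pi}{6k}\sqrt{\Delta_3(l)(24n+\Delta_2)}$, giving the main sum; the subdominant pieces are collected into $E(n)$.

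\medskip
\noindent\textbf{Key steps, in order.}
\begin{enumerate}[1.]
\item Set up the Cauchy integral for $g(n)$ on the circle $|q|=e^{-2\pi/N^2}$ and perform the Farey dissection into arcs near each $h/k$, $\gcd(h,k)=1$, $1\le k<N$.
\item On the arc near $h/k$, apply the eta transformation formula to each of the $R$ factors; collect the multiplier system into $\exp(-\pi i\sum_r\delta_r s(\cdot,\cdot))$ and hence into $\hat A_k(n)$, and extract the leading singular behavior governed by $\Delta_3(l)$ where $l$ is the relevant residue of $k$ modulo $L=\mathrm{lcm}(m_r)$.
\item Split each transformed integrand into its principal part (the term producing the Bessel function) and a tail. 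Hypothesis \eqref{3.29}, $\min_r \gcd^2(m_r,l)/m_r\ge \Delta_3(l)/24$, guarantees that the non-principal cusps do not produce contributions larger than the main term, so they can be absorbed into the error. The condition $\Delta_1\le 0$ ensures the Bessel index $-\Delta_1-1\ge -1$ and controls convergence of the relevant $q$-series.
\item Evaluate the principal-part integral by deforming to a Hankel contour and recognizing the classical integral representation of $I_{-\Delta_1-1}$; this yields the explicit main sum over $l\in\mathcal{L}_{>0}$ and $1\le k<N$, $k\equiv L^l$.
\item Bound the remaining terms uniformly: the tail of each arc, the contribution of cusps with $\Delta_3(l)\le 0$, and the truncation error from stopping at $k<N$. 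Summing these bounds, estimating the relevant $q$-Pochhammer tails by geometric series $\sum \exp(-\pi\gcd^2(m_r,l)/m_r)$-type quantities, and using the auxiliary function $\Xi_{\Delta_1}(N)$ to handle the three regimes $\Delta_1=0,-\tfrac12,-1$ (and $\zeta(-\Delta_1)$ otherwise), produces exactly the stated bound on $|E(n)|$. The requirement $n>-\Delta_2/24$ is what makes $24n+\Delta_2>0$, so the Bessel arguments and the powers $(24n+\Delta_2)/\Delta_3(l)$ are well-defined and positive.
\end{enumerate}

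\medskip
\noindent\textbf{Main obstacle.}
The genuinely delicate point is Step~3 together with the error estimate in Step~5: one must show that condition \eqref{3.29} is exactly strong enough to keep every subdominant cusp and every tail below the main term, and then track all constants through the triangle-inequality bounds to land on the precise expression for $|E(n)|$ — in particular the interplay between $\Delta_4(l)\exp(\pi\Delta_3(l)/24)$, the geometric-series corrections coming from the non-polar factors of $G(q)$, and the case analysis encoded in $\Xi_{\Delta_1}$. Since this is carried out in full in \cite[Theorem~1.1]{chern-eta}, here it suffices to invoke that theorem; we use it in the sequel only for the specific eta-quotients arising from \eqref{gene} with $2\le k\le 9$, for which all the quantities $\Delta_i$, $\hat A_k(n)$, and the hypotheses are verified by direct computation in the next step.
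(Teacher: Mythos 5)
The paper does not prove this lemma at all: it is quoted verbatim as Chern's Theorem~1.1 from \cite{chern-eta}, and your proposal correctly identifies that the right move is to invoke that reference rather than reprove it. Your accompanying circle-method sketch is a reasonable outline of how Chern's proof goes, but the operative content — citing \cite[Theorem~1.1]{chern-eta} — matches exactly what the paper does.
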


We will now use Lemma \ref{thm3.1} to prove Theorem \ref{thm2}.

{\noindent{\it Proof of Theorem \ref{thm2}}}. For $2\leq k\leq9$, the proof process is similar. Therefore, here we only demonstrate the case of $k=2$.
Let $k=2$ in \eqref{gene}, and based on the notation defined by Chern, we have $\mathbf{m}=(1,2,2,4)$ and $\delta=(-2,1,2,-1)$.
The calculations show that $\Delta_1=0$ and $\Delta_2=0$.
Furthermore, we find that $L=4$. The values of $\Delta_3(l)$ and $\Delta_4(l)$ for $1\leq l\leq L$ are presented in following table.
\begin{table}[h]
\centering
\renewcommand{\arraystretch}{1.5}
  \begin{tabular}{p{2cm}|p{2cm}|p{2cm}|p{2cm}|p{2cm}}
  \centering$l$           & \centering$1$                & \centering$2$      & \centering$3$                &~~~~~~~$4$
  \\[5pt] \hline \rule{0pt}{20pt}
  \centering$\Delta_3(l)$ &\centering {\large$\frac{3}{4}$}      &\centering $-3$     & \centering {\large$\frac{3}{4}$}      &~~~~~~~$0$
  \\[5pt] \hline \rule{0pt}{20pt}
  \centering$\Delta_4(l)$ &\centering {\large$\frac{\sqrt2}{2}$} & \centering$\sqrt2$ & \centering{\large$\frac{\sqrt2}{2}$} &~~~~~~~$1$
  \\[5pt]
  \end{tabular}
  \caption{The values of $\Delta_3(l)$ and $\Delta_4(l)$ for $1\leq l\leq4$.}
  \label{delta}
\end{table}
As a result, we have $\mathcal{L}_{>0}=\{1,3\}$.

Since in this case \eqref{3.29} is always satisfied, we can apply Lemma \ref{thm3.1}, which yields the following result:
\[\overline p_2(n)=E_2(n)+\frac{\pi^2}{\sqrt8{\mu_2}}\mathop\sum_{1\leq k\leq N \atop\ 2\nmid k}I_1\left(\frac{{\mu_2}}{k}\right)\frac{\hat A_k(n)}{k},\]
where
\begin{align*}
|E_2(n)|&\leq \frac{\pi^{-1}N^2}{n}\exp{\left( 2\pi nN^{-2}\right)}\cdot 2\frac{\sqrt2}{2}\exp\left(\frac{\pi}{4}\right)+2\exp\left(2\pi nN^{-2}\right)
\Bigg\{\sum_{1\leq l\leq4}\Delta_4(l)
\\
&\quad\times \exp\left(\frac{\pi\Delta_3(l)}{24}+\sum_{r=1}^{4}
\frac{|\delta_r|\exp(-\pi\mathrm{gcd}^2(m_r,l)/m_r)}
{(1-\exp(-\pi\mathrm{gcd}^2(m_r,l)/m_r))^2}\right)
-\sqrt2\exp\left(\frac{\pi}{32}\right)\Bigg\}.
\end{align*}

Let's assume $N=\lfloor {\mu_2}\rfloor$. Then, we have the following:
\begin{align}\label{3.299}
|E_2(n)|&\leq \frac{\sqrt2\pi}{2}\frac{{\lfloor {\mu_2}\rfloor}^2}{{\mu_2}^2}\exp\left( \frac{4{\mu_2}^2}{\pi {\lfloor {\mu_2}\rfloor}^2}+\frac{\pi}{4}\right)+2\exp\left(\frac{4{\mu_2}^2}{\pi {\lfloor {\mu_2}\rfloor}^2}\right)
\nonumber\\[5pt]
&\quad\times \left\{\sum_{1\leq l\leq4}\Delta_4(l)\exp\left(\frac{\pi\Delta_3(l)}{24}
+\sum_{r=1}^{4}\frac{|\delta_r|\exp(-\pi\mathrm{gcd}^2(m_r,l)/m_r)}
{(1-\exp(-\pi\mathrm{gcd}^2(m_r,l)/m_r))^2}\right)\right.
\nonumber\\[5pt]
&\quad\qquad-\sqrt2\exp\left(\frac{\pi}{32}\right)\Bigg\},
\end{align}
and using the following inequalities for ${\mu_2}\geq4$,
\[\frac{\lfloor {\mu_2}\rfloor^2}{{\mu_2}^2}\leq1, \quad \frac{{\mu_2}^2}{\lfloor {\mu_2}\rfloor^2}<\frac{{\mu_2}^2}{({\mu_2}-1)^2}<2,\]
connecting \eqref{3.299} with values in Table \ref{delta}, it follows
$|E_2(n)|\leq 631.$
Observing $\hat A_1(n)=1$, we can further conclude that
\[\overline p_2(n)=\frac{\pi^2}{\sqrt8{\mu_2}}I_1({\mu_2})+R_2(n),\]
where
\[R_2(n)=E_2(n)+\frac{\pi^2}{\sqrt8{\mu_2}}\mathop\sum_{3\leq k\leq N \atop\ 2\nmid k}I_1\left(\frac{{\mu_2}}{k}\right)\frac{\hat A_k(n)}{k}.\]
Next, we come to show that for ${\mu_2}\geq22,$
\begin{equation}\label{3.30}|R_2(n)|\leq \frac{\sqrt3\pi^{\frac{3}{2}}}{2{\mu_2}^{\frac{1}{2}}}\exp\left(\frac{{\mu_2}}{3}\right).\end{equation}
Since for any $n\geq0$, $k\geq1$, $|\hat A_k(n)|\leq k$ and $|e^{2\pi si}|=1$ for any $s\in \mathbb{R}$, we obtain that
\begin{align*}\left|
\frac{\pi^2}{\sqrt8{\mu_2}}\mathop\sum_{3\leq k\leq N \atop\ 2\nmid k}I_1\left(\frac{{\mu_2}}{k}\right)\frac{\hat A_k(n)}{k}\right|
\leq
\frac{\pi^2}{\sqrt8{\mu_2}}\mathop\sum_{3\leq k\leq N \atop\ 2\nmid k}I_1\left(\frac{{\mu_2}}{k}\right)
\leq \frac{\pi^2}{\sqrt8{\mu_2}}\frac{\lfloor {\mu_2}\rfloor}{2}I_1\left(\frac{{\mu_2}}{3}\right)
\leq \frac{\pi^2}{2\sqrt8}I_1\left(\frac{{\mu_2}}{3}\right).
\end{align*}
Recall that Bringmann, Kane, Rolen and Trippin \cite{bringmann-i-bessel} stated that for $s\geq1$,
\begin{align*}
I_1(s)\leq\sqrt{\frac{2}{\pi s}}e^s,
\end{align*}
which yields that
\begin{align*}\left|
\frac{\pi^2}{\sqrt8{\mu_2}}\mathop\sum_{3\leq k\leq N \atop\ 2\nmid k}I_1\left(\frac{{\mu_2}}{k}\right)\frac{\hat A_k(n)}{k}\right|
\leq \frac{\pi^2}{2\sqrt8}\sqrt{\frac{2}{\pi \frac{{\mu_2}}{3}}}\exp{\left(\frac{{\mu_2}}{3}\right)}
\leq \frac{\sqrt3\pi^{\frac{3}{2}}}{4{\mu_2}^{\frac{1}{2}}}\exp{\left(\frac{{\mu_2}}{3}\right)}.
\end{align*}
Thus, we get for ${\mu_2}\geq4$,
\begin{align*}
|R_2(n)|\leq 631+\frac{\sqrt3\pi^{\frac{3}{2}}}{4{\mu_2}^{\frac{1}{2}}}
\exp{\left(\frac{{\mu_2}}{3}\right)}.
\end{align*}
Moreover, for ${\mu_2}\geq22$, it can be easily checked that,
\[631<\frac{\sqrt3\pi^{\frac{3}{2}}}{4{\mu_2}^{\frac{1}{2}}}
\exp{\left(\frac{{\mu_2}}{3}\right)}.\]
So \eqref{3.30} is valid, which completes the proof.
\qed

In accordance with Theorem \ref{thm2}, we are able to derive the upper and lower bounds for $\p_k(n)$ when $2\leq k\leq9$.
\begin{cor}\label{thm3}
For $2\leq k\leq9$ and $\mu_k\geq \dot{n}_k$, we have
\begin{align}\label{up-low}
{M_k(n)}\left(1-\frac{1}{{\mu_k}^6}\right)\leq \p_k(n)\leq {M_k(n)}\left(1+\frac{1}{{\mu_k}^6}\right),
\end{align}
where
\begin{align*}
&\dot{n}_2=43,\quad \dot{n}_3=49,\quad \dot{n}_4=43,\quad \dot{n}_5=58,\\
&\dot{n}_6=130,\quad \dot{n}_7=102,\quad \dot{n}_8=129,\quad \dot{n}_9=268.
\end{align*}
Here, $M_k(n)=C_k(n)I_1(\mu_k)$, and  $C_k(n)$ is defined in Theorem \ref{thm2}.
\end{cor}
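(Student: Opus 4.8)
The plan is to extract the statement directly from Theorem~\ref{thm2} and then compare the orders of magnitude of the main term and the error term. Observe first that $\dot{n}_k\ge n_k$ for every $2\le k\le 9$, so whenever $\mu_k\ge\dot{n}_k$ Theorem~\ref{thm2} is applicable and yields
\[
\p_k(n)=M_k(n)+R_k(n),\qquad M_k(n)=C_k(n)I_1(\mu_k),\qquad |R_k(n)|\le R_k'(n),
\]
with $C_k(n)$ and $R_k'(n)$ the entries of Table~\ref{pk(n)-bounds}. Since $M_k(n)>0$, the two-sided estimate \eqref{up-low} is equivalent to the single inequality
\[
R_k'(n)\le\frac{M_k(n)}{\mu_k^6},\qquad\text{equivalently}\qquad \mu_k^{6}R_k'(n)\le C_k(n)I_1(\mu_k),
\]
so the entire argument reduces to this estimate.

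First I would bound $I_1(\mu_k)$ from below. Since $\dot{n}_k\ge 43>26$, Lemma~\ref{lem3.2} gives $I_1(\mu_k)\ge\frac{e^{\mu_k}}{\sqrt{2\pi\mu_k}}\bigl(E_I(\mu_k)-\frac{31}{\mu_k^6}\bigr)$; as $E_I(s)-31/s^6$ is increasing in $s$ and exceeds $\tfrac{99}{100}$ once $s\ge 43$, writing $C_k(n)=c_k/\mu_k$ with $c_k$ the explicit constant read off from Table~\ref{pk(n)-bounds} we obtain $M_k(n)\ge\frac{99\,c_k}{100\sqrt{2\pi}}\,\mu_k^{-3/2}e^{\mu_k}$. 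Each $R_k'(n)$ has the shape $D_k\,\mu_k^{-1/2}\exp(\mu_k/r_k)$ with $D_k$ an explicit constant and $r_k\in\{3,5\}$. Substituting both bounds into the reduced inequality, it suffices to verify
\[
\frac{100\sqrt{2\pi}\,D_k}{99\,c_k}\,\mu_k^{7}\le\exp\!\Bigl(\bigl(1-\tfrac{1}{r_k}\bigr)\mu_k\Bigr).
\]
Because $1-1/r_k\ge 2/3$, the right-hand side grows exponentially while the left-hand side is polynomial, so this holds for all sufficiently large $\mu_k$; carrying out the comparison numerically, row by row for $k=2,\dots,9$, determines the thresholds $\dot{n}_k$. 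Then \eqref{up-low} follows from $|\p_k(n)-M_k(n)|\le R_k'(n)\le M_k(n)/\mu_k^6$.

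The proof has no conceptual difficulty: the corollary is a soft consequence of Theorem~\ref{thm2} together with the elementary fact that exponentials dominate powers. The only point that needs care is the bookkeeping of constants — fixing a lower bound for $E_I(s)-31/s^6$ that is clean and simultaneously valid on every relevant range, and then solving the transcendental inequality above tightly enough to recover the tabulated $\dot{n}_k$ rather than a larger value. Since after dividing by $\mu_k^{6}$ the ratio $R_k'(n)/M_k(n)$ decays like $\mu_k\,e^{-(1-1/r_k)\mu_k}$, there is a generous margin, so the stated thresholds should come out comfortably.
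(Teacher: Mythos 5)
Your proposal is correct and follows essentially the same route as the paper: both reduce the two-sided bound \eqref{up-low} to the single inequality $R_k'(n)\le M_k(n)/\mu_k^{6}$ and then establish it by bounding $I_1(\mu_k)$ from below so that the ratio $R_k'(n)/M_k(n)$ decays like a power of $\mu_k$ times $e^{-(1-1/r_k)\mu_k}$. The only difference is presentational: the paper delegates the final estimate $G_k(n)\le \mu_k^{-6}$ to the proof of Theorem~1.3 of Dong--Ji, whereas you derive it directly from Lemma~\ref{lem3.2} and a row-by-row numerical check of the thresholds.
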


\begin{proof}
Let us define
\begin{equation*}
G_k(n):=\frac{R_k'(n)}{C_k(n)I_1({\mu_k})},
\end{equation*}
where $R_k'(n)$ is defined in Theorem \ref{thm2}.

By utilizing Theorem \ref{thm2}, we can establish the following inequalities for $2\leq k\leq9$ when $\mu_k\geq n_k$:
\[{M_k(n)}(1-G_k(n))\leq \overline p_k(n) \leq {M_k(n)}(1+G_k(n)).\]
First, we will prove the case of $k=2$. The expression for $G_2$ can be simplified as follows.
\begin{equation*}
G_2(n):=
\sqrt{\frac{6{\mu_2}}{\pi}}
\frac{\exp{\left(\frac{{\mu_2}}{3}\right)}}{I_1({\mu_2})}.
\end{equation*}
According to the proof of Theorem 1.3 in \cite{dong-ji}, we can demonstrate that for $x\geq43$,
\[G(x):=\sqrt{\frac{6{x}}{\pi}}
\frac{\exp{\left(\frac{{x}}{3}\right)}}{I_1({x})}\leq \frac{1}{{x}^6},\]
which in turn implies that for $\mu_2\geq43$,
\[G_2(n)\leq \frac{1}{{\mu_2}^6}.\]
Thus, we have verified the validity of \eqref{up-low} when $k=2$.
The proofs for other cases are similar, and we omit the details here.
This completes the proof.
\end{proof}

\section{Proof of Theorem \ref{thm1.5} }\label{Sec-4}
In this section, our goal is to demonstrate the log-concavity and third-order Tur\'{a}n inequalities of $\p_k(n)$ for $2\leq k\leq9$. To begin, we will establish both upper and lower bounds for $Q_k(n)$, where
\begin{align*}
Q_k(n)=\frac{\p_k(n-1)\p_k(n+1)}{\p_k(n)^2}.
\end{align*}

\begin{thm}\label{thm4}
For $\mu_k\geq\ddot{n}_k$, that is $n\geq \tilde{n}_k$, we establish the following inequalities:
\begin{align}\label{Qkn-bounds}
\tilde{L}_k(n)<Q_k(n)<\tilde{R}_k(n),
\end{align}
where all these values are defined in following table.

\begin{table}[h]
\begin{tabular}{c|c|c|c|c}
  $k$ & $\ddot{n}_k$ & $\tilde{n}_k$ & $\tilde{L}_k(n)$ & $\tilde{R}_k(n)$
  \\[3pt]   \hline \rule{0pt}{23pt}
  $2$ & $167$ & $5652$
  & $1$ {\large$-\frac{\pi^4}{16{\mu_2}^3}+\frac{3\pi^4}{16{\mu_2}^4}
  -\frac{7}{{\mu_2}^5}-\frac{130}{{\mu_2}^6}$}
  & $1${\large $-\frac{\pi^4}{16{\mu_2}^3}+\frac{3\pi^4}{16{\mu_2}^4}
  -\frac{6}{{\mu_2}^5}+\frac{120+\frac{\pi^8}{256}}{{\mu_2}^6}$}
  \\[9pt] \hline \rule{0pt}{23pt}
  $3$ & $49$ & $365$
  & $1$ {\large$-\frac{\pi^4}{9\mu_3^3}+\frac{\pi^4}{3\mu_3^4}
  -\frac{13}{\mu_3^5}-\frac{200}{\mu_3^6}$}
  & $1$ {\large$-\frac{\pi^4}{9\mu_3^3}+\frac{\pi^4}{3\mu_3^4}
  -\frac{6}{\mu_3^5}+\frac{146+\frac{\pi^8}{81}}{\mu_3^6}$}
  \\[9pt] \hline \rule{0pt}{23pt}
  $4$ & $58$ & $455$
  & $1$ {\large$-\frac{9\pi^4}{64\mu_4^3}+\frac{27\pi^4}{64\mu_4^4}
  -\frac{16}{\mu_4^5}-\frac{300}{\mu_4^6}$}
  & $1$ {\large$-\frac{9\pi^4}{64\mu_4^3}+\frac{27\pi^4}{64\mu_4^4}
  -\frac{15}{\mu_4^5}+\frac{150+\frac{81\pi^8}{4096}}{\mu_4^6}$}
  \\[9pt] \hline \rule{0pt}{23pt}
  $5$ & $94$ & $1120$
  & $1$ {\large$-\frac{4\pi^4}{25\mu_5^3}+\frac{12\pi^4}{25\mu_5^4}
  -\frac{18}{\mu_5^5}-\frac{400}{\mu_5^6}$}
  & $1$ {\large$-\frac{4\pi^4}{25\mu_5^3}+\frac{12\pi^4}{25\mu_5^4}
  -\frac{17}{\mu_5^5}+\frac{400}{\mu_5^6}$}
  \\[9pt] \hline \rule{0pt}{23pt}
  $6$ & $130$ & $2055$
  & $1$ {\large$-\frac{25\pi^4}{144\mu_6^3}+\frac{25\pi^4}{48\mu_6^4}
  -\frac{20}{\mu_6^5}-\frac{441}{\mu_6^6}$}
  & $1$ {\large$-\frac{25\pi^4}{144\mu_6^3}+\frac{25\pi^4}{48\mu_6^4}
  -\frac{19}{\mu_6^5}+\frac{441}{\mu_6^6}$}
  \\[9pt] \hline \rule{0pt}{23pt}
  $7$ & $102$ & $1230$
  & $1$ {\large$-\frac{9\pi^4}{49\mu_7^3}+\frac{27\pi^4}{49\mu_7^4}
  -\frac{21}{\mu_7^5}-\frac{500}{\mu_7^6}$}
  & $1$ {\large$-\frac{9\pi^4}{49\mu_7^3}+\frac{27\pi^4}{49\mu_7^4}
  -\frac{20}{\mu_7^5}+\frac{500}{\mu_7^6}$}
  \\[9pt] \hline \rule{0pt}{23pt}
  $8$ & $300$ & $10422$
  & $1$ {\large$-\frac{49\pi^4}{256\mu_8^3}+\frac{147\pi^4}{256\mu_8^4}
  -\frac{21}{\mu_8^5}-\frac{505}{\mu_8^6}$}
  & $1$ {\large$-\frac{49\pi^4}{256\mu_8^3}+\frac{147\pi^4}{256\mu_8^4}
  -\frac{20}{\mu_8^5}+\frac{505}{\mu_8^6}$}
  \\[9pt] \hline \rule{0pt}{23pt}
  $9$ & $268$ & $8187$
  & $1$ {\large$-\frac{16\pi^4}{81\mu_9^3}+\frac{16\pi^4}{27\mu_9^4}
  -\frac{22}{\mu_9^5}-\frac{524}{\mu_9^6}$}
  & $1$ {\large$-\frac{16\pi^4}{81\mu_9^3}+\frac{16\pi^4}{27\mu_9^4}
  -\frac{21}{\mu_9^5}+\frac{529}{\mu_9^6}$ }
  \\[3pt]
\end{tabular}
\caption{Values of $\tilde{n}_k$, $\tilde{L}_k(n)$ and $\tilde{R}_k(n)$ for $2\leq k\leq9$.}
\label{Qk(n)-bounds}
\end{table}
\end{thm}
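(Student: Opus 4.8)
The plan is to combine the two-sided bound on $\p_k(n)$ from Corollary~\ref{thm3} with the Bessel-quotient estimate of Lemma~\ref{lem3.3}, and then to reduce everything to explicit polynomial inequalities in $\mu_k$ that can be verified for $\mu_k\ge\ddot n_k$. As elsewhere in the paper I would carry out the details only for $k=2$; the cases $3\le k\le9$ are entirely analogous. As a preliminary remark, comparing the tables shows $\ddot n_k\ge\max\{n'_k,\dot n_k\}$ for every $2\le k\le9$, so both Lemma~\ref{lem3.3} and Corollary~\ref{thm3} are available whenever $\mu_k\ge\ddot n_k$.

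First I would rewrite the ratio of main terms. Writing $M_k(n)=C_k(n)I_1(\mu_k)$ and using that $C_k(n)=\gamma_k/\mu_k$ for the constant $\gamma_k$ read off from Table~\ref{pk(n)-bounds}, one gets
\[
\frac{M_k(n-1)M_k(n+1)}{M_k(n)^2}
=\frac{\mu_k^2}{\mu_k^-\mu_k^+}\cdot\frac{I_1(\mu_k^-)I_1(\mu_k^+)}{I_1(\mu_k)^2}.
\]
By Corollary~\ref{thm3}, $Q_k(n)$ lies between $\dfrac{M_k(n-1)M_k(n+1)}{M_k(n)^2}$ times the factor $\dfrac{(1-(\mu_k^-)^{-6})(1-(\mu_k^+)^{-6})}{(1+\mu_k^{-6})^2}$ (lower bound) and the same ratio times $\dfrac{(1+(\mu_k^-)^{-6})(1+(\mu_k^+)^{-6})}{(1-\mu_k^{-6})^2}$ (upper bound); and by Lemma~\ref{lem3.3} the quotient $I_1(\mu_k^-)I_1(\mu_k^+)/I_1(\mu_k)^2$ can be replaced by $\tfrac{\mu_k}{\sqrt{\mu_k^-\mu_k^+}}U_k(n)$ from below and by $\tfrac{\mu_k}{\sqrt{\mu_k^-\mu_k^+}}V_k(n)$ from above. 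Hence the prefactor $\mu_k^2/(\mu_k^-\mu_k^+)$ becomes $\mu_k^3/(\mu_k^-\mu_k^+)^{3/2}$.

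Next I would evaluate this prefactor explicitly. Since each $\mu_k(n)$ equals $\sqrt{\beta_k n}$ with $\beta_k$ the constant appearing in its definition, we have $\mu_k(n\pm1)^2=\mu_k^2\pm\beta_k$, so $\mu_k^-\mu_k^+=\sqrt{\mu_k^4-\beta_k^2}$ and
\[
\frac{\mu_k^3}{(\mu_k^-\mu_k^+)^{3/2}}=\Bigl(1-\frac{\beta_k^2}{\mu_k^4}\Bigr)^{-3/4}.
\]
For $\mu_k\ge\ddot n_k$ the number $x:=\beta_k^2/\mu_k^4$ is minuscule, and the elementary bounds $1+\tfrac34 x\le(1-x)^{-3/4}\le 1+\tfrac34 x+x^2$ show that this prefactor equals $1+\tfrac{3\beta_k^2}{4\mu_k^4}$ up to an $O(1/\mu_k^8)$ error; for $k=2$ we have $\beta_2=\pi^2/2$, so $\tfrac{3\beta_2^2}{4\mu_2^4}=\tfrac{3\pi^4}{16\mu_2^4}$, which is precisely the $1/\mu_2^4$ term of $\tilde L_2(n)$ and $\tilde R_2(n)$. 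Assembling all the pieces, the upper estimate takes the form
\[
Q_k(n)\le\Bigl(1-\frac{\beta_k^2}{\mu_k^4}\Bigr)^{-3/4}V_k(n)\cdot
\frac{\bigl(1+(\mu_k^-)^{-6}\bigr)\bigl(1+(\mu_k^+)^{-6}\bigr)}{\bigl(1-\mu_k^{-6}\bigr)^2},
\]
with the obvious symmetric lower estimate using $U_k(n)$. Since $U_k(n)$ and $V_k(n)$ are explicit products of two truncated series in $1/\mu_k$ and $(\mu_k^\pm)^{-6}=\mu_k^{-6}+O(1/\mu_k^8)$, expanding the right-hand side and bounding the negligible high-order remainders turns $\tilde L_k(n)<Q_k(n)<\tilde R_k(n)$ into the statement that two explicit rational functions of $\mu_k$---each a single polynomial (of degree about $25$, as in Lemma~\ref{lem3.3}) over a manifestly positive denominator---have the required sign for $\mu_k\ge\ddot n_k$. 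I would verify this by the device already used in the proof of Lemma~\ref{lem3.3}: dominate every low-order coefficient of the numerator in absolute value by the leading ones, reduce to a short polynomial in $\mu_k$, and check its sign at $\mu_k=\ddot n_k$. Finally, $\mu_k\ge\ddot n_k$ is equivalent to $n\ge\ddot n_k^2/\beta_k$, which gives $\tilde n_k=\lceil\ddot n_k^2/\beta_k\rceil$.

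The main obstacle is the bookkeeping in the last step: organising the product of $(1-\beta_k^2/\mu_k^4)^{-3/4}$, the two-factor expressions for $U_k(n)$ and $V_k(n)$, and the three $\mu_k^{-6}$-correction factors while tracking everything down to order $1/\mu_k^6$, and then certifying the resulting high-degree polynomial inequalities on the given ranges. A closely related point---where the exact $1/\mu_k^6$ coefficients in $\tilde L_k(n),\tilde R_k(n)$ (and the thresholds $\ddot n_k$) are pinned down---is to choose those coefficients loosely enough to absorb all the $O(1/\mu_k^6)$ and $O(1/\mu_k^8)$ slack generated by the successive estimates, while keeping them tight enough to drive the log-concavity and third-order Tur\'{a}n arguments of the next section. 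Everything else is a mechanical composition of the two inputs, Corollary~\ref{thm3} and Lemma~\ref{lem3.3}.
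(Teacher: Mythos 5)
Your proposal is correct and follows essentially the same route as the paper: sandwich $Q_k(n)$ using Corollary \ref{thm3}, replace the Bessel quotient via Lemma \ref{lem3.3}, bound the prefactor $\mu_k^3/(\mu_k^-\mu_k^+)^{3/2}$ by $1+\tfrac{3\beta_k^2}{4\mu_k^4}+O(\mu_k^{-8})$, and certify the resulting polynomial inequalities by dominating low-order coefficients with the leading ones. The only cosmetic difference is that you bound the prefactor with the elementary inequality $1+\tfrac34x\le(1-x)^{-3/4}\le1+\tfrac34x+x^2$, whereas the paper verifies the equivalent two-sided estimate by a direct polynomial computation; both yield the same $\tfrac{3\pi^4}{16\mu_2^4}$ term and suffice for the claimed bounds.
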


\vspace{3cm}
\begin{proof}
Define
\begin{align*}
A_k(n)&=\frac{M_k(n-1)M_k(n+1)}{{M_k(n)}^2},
\end{align*}
where $M_k(n)$ is defined in Corollary \ref{thm3}.

From Corollary \ref{thm3}, we see that for $\mu_k\geq \tilde{n}_k$,
\begin{equation}\label{3.344}
A_k(n)L_{Q_k(n)}\leq {Q_k(n)}\leq A_k(n)R_{Q_k(n)},
\end{equation}
where
\begin{equation}\label{3.35}
L_{Q_k(n)}=\frac{\left(1-\frac{1}{{{\mu_k^-}}^6}\right)
\left(1-\frac{1}{{{\mu_k^+}}^6}\right)}{\left(1+\frac{1}{{{\mu_k}}^6}\right)^2},
\quad
R_{Q_k(n)}=\frac{\left(1+\frac{1}{{{\mu_k^-}}^6}\right)
\left(1+\frac{1}{{{\mu_k^+}}^6}\right)}{\left(1-\frac{1}{{{\mu_k}}^6}\right)^2}.
\end{equation}
According to Theorem \ref{thm2}, $A_k(n)$ can be simplified as
\begin{align}\label{Akn-1}
A_k(n)&=\frac{{\mu_k}^2I_1({{\mu_k^-}})
I_1({{\mu_k^+}})}{{{\mu_k^-}}{{\mu_k^+}}I_1({\mu_k})^2}.
\end{align}
By Lemma \ref{lem3.3}, the upper and lower bounds of \eqref{Akn-1} are given by
\begin{align}\label{Akn-bounds}
\frac{{{\mu_k}}^3}{\sqrt{{{\mu_k^-}}^3{{\mu_k^+}}^3}}U_k(n)\leq A_k(n)\leq\frac{{{\mu_k}}^3}{\sqrt{{{\mu_k^-}}^3{{\mu_k^+}}^3}}V_k(n).
\end{align}
Next, we aim to provide upper and lower bounds for
\[\frac{{{\mu_k}}^3}{\sqrt{{{\mu_k^-}}^3{{\mu_k^+}}^3}}.\]
We begin by considering the case of $k=2$.
It can be established that for ${{\mu_2}}\geq3$,
\begin{equation}\label{3.355}
1+\frac{3\pi^4}{16{{\mu_2}}^4}+\frac{\pi^8}{35{{\mu_2}}^8}
\leq\frac{{{\mu_2}}^3}{\sqrt{{{\mu_2^-}}^3{{\mu_2^+}}^3}}\leq1
+\frac{3\pi^4}{16{{\mu_2}}^4}+\frac{\pi^8}{12{{\mu_2}}^8},
\end{equation}
which is equivalent to
\begin{align}\label{3.366}
{{\mu_2}}^{12}-{{\mu_2^-}}^6{{\mu_2^+}}^6
{\left(1+\frac{3\pi^4}{16{{\mu_2}}^4}+\frac{\pi^8}{35{{\mu_2}}^8}\right)}^4
&\geq0,
\\[5pt] \label{3.377}
{{\mu_2}}^{12}-{{\mu_2^-}}^6{{\mu_2^+}}^6
{\left(1+\frac{3\pi^4}{16{{\mu_2}}^4}+\frac{\pi^8}{12{{\mu_2}}^8}\right)}^4
&\leq0.
\end{align}
By utilizing \eqref{3.100}, we can express \eqref{3.366} and \eqref{3.377} in the following form:
\begin{align*}
&{{\mu_2}}^{12}-{{\mu_2^-}}^6{{\mu_2^+}}^6
\left(1+\frac{3\pi^4}{16{{\mu_2}}^4}+\frac{\pi^8}{35{{\mu_2}}^8}\right)^4
\\[5pt]
&=\frac{1}{6294077440000{{\mu_2}}^{32}}
\left(313298944000 \pi^8 {{\mu_2}}^{36}+177899008000 \pi^{12} {{\mu_2}}^{32}\right.
\\[5pt]
&\left.\qquad +3408753600 \pi^{16} {{\mu_2}}^{28}-5446075600 \pi^{20} {{\mu_2}}^{24}-1992890060 \pi^{24} {{\mu_2}}^{20}\right.
\\[5pt]
&\qquad -167858335 \pi^{28} {{\mu_2}}^{16}+19789376 \pi^{32} {{\mu_2}}^{12}+8611328 \pi^{36} {{\mu_2}}^8
\\[5pt]
&\left.\qquad+933888 \pi^{40} {{\mu_2}}^4+65536 \pi^{44}\right)
\end{align*}and
\begin{align*}
&{{\mu_2}}^{12}-{\mu_2^-}^6{\mu_2^+}^6
{\left(1+\frac{3\pi^4}{16{{\mu_2}}^4}+\frac{\pi^8}{12{{\mu_2}}^8}\right)}^4\\
&=-\frac{1}{339738624{{\mu_2}}^{32}}\left(57507840 \pi^8 {{\mu_2}}^{36}-23556096 \pi^{12} {{\mu_2}}^{32}+2714688 \pi^{16} {{\mu_2}}^{28}\right.\\
&\left.\qquad-3100464 \pi^{20} {\mu_2}^{24}+580044 \pi^{24} {{\mu_2}}^{20}-94689 \pi^{28}{{\mu_2}}^{16}+51952 \pi^{32} {{\mu_2}}^{12}\right.\\
&\left.\qquad-4704 \pi^{36}{{\mu_2}}^8+768 \pi^{40} {{\mu_2}}^4-256 \pi^{44}\right).
\end{align*}
We can further compute that when ${{\mu_2}}\geq3$,
\begin{align*}
\qquad&313298944000 \pi^8 {{\mu_2}}^{36}+177899008000 \pi^{12} {{\mu_2}}^{32}+3408753600 \pi^{16} {{\mu_2}}^{28}\\
& \quad-5446075600 \pi^{20} {{\mu_2}}^{24}-1992890060 \pi^{24} {{\mu_2}}^{20}-167858335 \pi^{28} {{\mu_2}}^{16}\\
&\quad +19789376 \pi^{32} {{\mu_2}}^{12}+8611328 \pi^{36} {{\mu_2}}^8+933888 \pi^{40} {{\mu_2}}^4+65536 \pi^{44}\geq0
\end{align*}
and when $\mu_2\geq4$,
\begin{align*}
&57507840 \pi^8 {{\mu_2}}^{36}-23556096 \pi^{12} {{\mu_2}}^{32}+2714688 \pi^{16} {{\mu_2}}^{28}\\
&\quad-3100464 \pi^{20} {\mu_2}^{24}+580044 \pi^{24} {{\mu_2}}^{20}-94689 \pi^{28}{{\mu_2}}^{16}+51952 \pi^{32} {{\mu_2}}^{12}\\
&\quad-4704 \pi^{36}{{\mu_2}}^8+768 \pi^{40} {{\mu_2}}^4-256 \pi^{44}\geq0.
\end{align*}
Therefore, we have verified \eqref{3.366} and \eqref{3.377}, and thus obtained \eqref{3.355}.
Substituting \eqref{3.355} into \eqref{Akn-bounds}, we get that for ${\mu_2}\geq167$, i.e. $n\geq5652$,
\begin{align}\label{3.37}
A_2(n)&\geq \left(1+\frac{3\pi^4}{16{{\mu_2}}^4}+\frac{\pi^8}{35{{\mu_2}}^8}\right)
\left(1-\frac{\pi^4}{16{{\mu_2}}^3}-\frac{5\pi^8}{512{{\mu_2}}^7}\right)
\left(1-\frac{7}{{{\mu_2}}^5}-\frac{115}{{{\mu_2}}^6}\right)
\end{align}
and
\begin{align}\label{3.38}
A_2(n)&\leq \left(1+\frac{3\pi^4}{16{{\mu_2}}^4}+\frac{\pi^8}{12{{\mu_2}}^8}\right)
\left(1-\frac{\pi^4}{16{{\mu_2}}^3}+\frac{\pi^8}{256{{\mu_2}}^6}\right)
\left(1-\frac{6}{{{\mu_2}}^5}+\frac{115}{{{\mu_2}}^6}\right).
\end{align}
Then, we will compute the upper bound for $R_{Q_2(n)}$ and the lower bound for $L_{Q_2(n)}$ in \eqref{3.35}.
Applying \eqref{3.100} into \eqref{3.35}, we find that \begin{equation*}L_{Q_2(n)}=\frac{{{\mu_2}}^{12}\left(\left({{\mu_2}}^2+\frac{\pi^2}{2}\right)^3-1\right)\left(\left({{\mu_2}}^2-\frac{\pi^2}{2}\right)^3-1\right)}{({{\mu_2}}^6+1)^2\left({{\mu_2}}^4-\frac{\pi^4}{4}\right)^3}\end{equation*}
and \begin{equation*}R_{Q_2(n)}=\frac{{{\mu_2}}^{12}\left(\left({{\mu_2}}^2+\frac{\pi^2}{2}\right)^3+1\right)\left(\left({{\mu_2}}^2-\frac{\pi^2}{2}\right)^3+1\right)}{({{\mu_2}}^6-1)^2\left({{\mu_2}}^4-\frac{\pi^4}{4}\right)^3}.\end{equation*}Since $L_{Q_2(n)},R_{Q_2(n)}$ are both just polynomial about ${\mu_2}$, it can be readily checked that when ${\mu_2}\geq57$,  \begin{equation}\label{3.39}L_{Q_2(n)}\geq1-\frac{5}{{{\mu_2}}^6}\quad \mathrm{and} \quad R_{Q_2(n)}\leq1+\frac{5}{{{\mu_2}}^6}.\end{equation}
By substituting \eqref{3.37}, \eqref{3.38}, and \eqref{3.39} into \eqref{3.344}, we obtain that for ${{\mu_2}}\geq167$, i.e. $n\geq5652$,
\begin{align}\label{3.40}
{Q_2(n)}
&\geq \left(1+\frac{3\pi^4}{16{{\mu_2}}^4}+\frac{\pi^8}{35{{\mu_2}}^8}\right)
\left(1-\frac{\pi^4}{16{{\mu_2}}^3}-\frac{5\pi^8}{512{{\mu_2}}^7}\right)
\nonumber\\[5pt]
&\quad \times\left(1-\frac{7}{{{\mu_2}}^5}-\frac{115}{{{\mu_2}}^6}\right)
\left(1-\frac{5}{{{\mu_2}}^6}\right)
\end{align}
and
\begin{align}\label{3.41}
{Q_2(n)}
&\leq \left(1+\frac{3\pi^4}{16{{\mu_2}}^4}+\frac{\pi^8}{12{{\mu_2}}^8}\right)
\left(1-\frac{\pi^4}{16{{\mu_2}}^3}+\frac{\pi^8}{256{{\mu_2}}^6}\right)
\nonumber\\[5pt]
&\quad \times\left(1-\frac{6}{{{\mu_2}}^5}+\frac{115}{{{\mu_2}}^6}\right)
\left(1+\frac{5}{{{\mu_2}}^6}\right).
\end{align}
Now we shall  compute  that when ${\mu_2}\geq38$,
\begin{align}\label{3.42}
&\left(1+\frac{3\pi^4}{16{{\mu_2}}^4}+\frac{\pi^8}{35{{\mu_2}}^8}\right)
\left(1-\frac{\pi^4}{16{{\mu_2}}^3}-\frac{5\pi^8}{512{{\mu_2}}^7}\right)
\left(1-\frac{7}{{{\mu_2}}^5}-\frac{115}{{{\mu_2}}^6}\right)
\left(1-\frac{5}{{{\mu_2}}^6}\right)
\nonumber\\[5pt]
&\quad>1-\frac{\pi^4}{16{{\mu_2}}^3}+\frac{3\pi^4}{16{{\mu_2}}^4}
-\frac{7}{{{\mu_2}}^5}-\frac{130}{{{\mu_2}}^6},
\end{align}
and when ${{\mu_2}}\geq16$,
\begin{align}\label{3.43}
&\left(1+\frac{3\pi^4}{16{{\mu_2}}^4}+\frac{\pi^8}{12{{\mu_2}}^8}\right)
\left(1-\frac{\pi^4}{16{{\mu_2}}^3}+\frac{\pi^8}{256{{\mu_2}}^6}\right)
\left(1-\frac{6}{{{\mu_2}}^5}+\frac{115}{{{\mu_2}}^6}\right)
\left(1+\frac{5}{{{\mu_2}}^6}\right)
\nonumber\\[5pt]
&\quad<1-\frac{\pi^4}{16{{\mu_2}}^3}+\frac{3\pi^4}{16{{\mu_2}}^4}
-\frac{6}{{{\mu_2}}^5}+\frac{120+\frac{\pi^8}{256}}{{{\mu_2}}^6}.
\end{align}
Notice that
\begin{align*}
&\left(1+\frac{3\pi^4}{16{{\mu_2}}^4}+\frac{\pi^8}{35{{\mu_2}}^8}\right)
\left(1-\frac{\pi^4}{16{{\mu_2}}^3}-\frac{5\pi^8}{512{{\mu_2}}^7}\right)
\left(1-\frac{7}{{{\mu_2}}^5}-\frac{115}{{{\mu_2}}^6}\right)
\left(1-\frac{5}{{{\mu_2}}^6}\right)
\\[5pt]
&\quad-\left(1-\frac{\pi^4}{16{{\mu_2}}^3}+\frac{3\pi^4}{16{{\mu_2}}^4}
-\frac{7}{{{\mu_2}}^5}-\frac{130}{{{\mu_2}}^6}\right)
=\frac{1}{286720{{\mu_2}}^{27}}\sum_{i=0}^{21}c_i{{\mu_2}}^i,
\end{align*}
and
\begin{align*}
&\left(1+\frac{3\pi^4}{16{{\mu_2}}^4}+\frac{\pi^8}{12{{\mu_2}}^8}\right)
\left(1-\frac{\pi^4}{16{{\mu_2}}^3}+\frac{\pi^8}{256{{\mu_2}}^6}\right)
\left(1-\frac{6}{{{\mu_2}}^5}+\frac{115}{{{\mu_2}}^6}\right)
\left(1+\frac{5}{{{\mu_2}}^6}\right)
\\[5pt]
&\quad-\left(1-\frac{\pi^4}{16{{\mu_2}}^3}+\frac{3\pi^4}{16{{\mu_2}}^4}
-\frac{6}{{{\mu_2}}^5}+\frac{120+\frac{\pi^8}{256}}{{{\mu_2}}^6}\right)
=-\frac{1}{12288{{\mu_2}}^{26}}\sum_{j=0}^{20}d_j{{\mu_2}}^j,
\end{align*}
where $c_i$ and $d_j$ are real numbers. We list the values of $c_{19}-c_{21}$ and $d_{18}-d_{20}$:
\begin{align*}
&c_{19}=125440\pi^4+8192\pi^8,\quad\quad c_{20}=-6160\pi^8,\qquad\qquad c_{21}=2867200,
\\[3pt]
&d_{18}=-4608\pi^4-1024\pi^8,\quad\quad d_{19}=144\pi^8,\qquad\qquad\quad d_{20}=96\pi^8.
\end{align*}
It can be measured that when ${\mu_2}\geq3$, for $ 0\leq i\leq18$,
$$-|c_i|{{\mu_2}}^i\geq -|c_{19}|{{\mu_2}}^{19},$$
and when ${{\mu_2}}\geq3$, for $ 0\leq j\leq17$,
$$-|d_j|{{\mu_2}}^j\geq -|d_{18}|{{\mu_2}}^{18}.$$
Further, we can get
\begin{align*}
\sum_{i=0}^{21}c_i{{\mu_2}}^i
&\geq-\sum_{i=0}^{19}|c_i|{{\mu_2}}^{19}+c_{20}{{\mu_2}}^{20}+c_{21}{{\mu_2}}^{21}
\geq-20|c_{19}|{{\mu_2}}^{19}+c_{20}{{\mu_2}}^{20}+c_{21}{{\mu_2}}^{21}
\end{align*}
and similarly,
\begin{align*}
\sum_{j=0}^{20}d_j{{\mu_2}}^j
&\geq-\sum_{j=0}^{18}|d_j|{{\mu_2}}^{18}+d_{19}{{\mu_2}}^{19}+d_{20}{{\mu_2}}^{20}
\geq-19|d_{18}|{{\mu_2}}^{18}+d_{19}{{\mu_2}}^{19}+d_{20}{{\mu_2}}^{20}.
\end{align*}
Since for ${{\mu_2}}\geq38$,
$$-20|c_{19}|+c_{20}{{\mu_2}}+c_{21}{{\mu_2}}^{2}>0$$
and for ${{\mu_2}}\geq14$,
$$-19|d_{18}|+d_{19}{{\mu_2}}+d_{20}{{\mu_2}}^{2}>0,$$we see that (\ref{3.42}) and \eqref{3.43} are true.
By plugging \eqref{3.42} and \eqref{3.43} into \eqref{3.40} and \eqref{3.41}, respectively, we can establish that \eqref{Qkn-bounds} holds when $k=2$.
The proofs for other cases are similar, and therefore, we omit them here.
This completes the proof.
\end{proof}

Subsequently, we proceed with the proof of Theorem \ref{thm1.5}.

{\noindent {\it Proof of Theorem \ref{thm1.5}}}.
To establish log-concavity, we need to show that ${Q_k(n)}\leq1$ for $2\leq k\leq9$.
From Theorem \ref{thm4}, we see that for $2\leq k\leq9$ and ${n}\geq\tilde{n}_k$,
\begin{equation*}
{Q_k(n)}<\tilde{R}_k<1,
\end{equation*}
where $\tilde{n}_k$ and $\tilde{R}_k$ are defined in Table \ref{Qk(n)-bounds}.
Besides, ${Q_k(n)}<1$ holds as long as one can verify it holds for $\bar{n}_k\leq n\leq \tilde{n}_k$,
where $\bar{n}_k$ is defined in Theorem \ref{thm1.5}.
So ${Q_k(n)}<1$ for $n\geq\bar{n}_k$ and $\p_k(n)$ is log-concave for $n\geq\bar{n}_k$ when $2\leq k\leq9$.

To demonstrate that $\p_k(n)$ for $2\leq k\leq9$ satisfies the third order Tur\'{a}n inequalities for $n\geq\hat{n}_k$, we rely on Chen, Jia, and Wang's work \cite{turan-for-partition}. It is sufficient to prove that when $n\geq\hat{n}_k$,
\begin{align}\label{3.54}
4(1-{Q_k(n)})(1-Q_k(n+1))-(1-{Q_k(n)}Q_k(n+1))^2>0.
\end{align}
Recall that Jia \cite[Lemma~5.1]{Jia-2023} obtained the following result.
If $u$ and $v$ are positive real numbers satisfying $\frac{15}{16}\leq u<v<1$ and $u+\sqrt{(1-u)^3}>v$ holds, then
\begin{align}\label{lem3.4}
4(1-u)(1-v)-(1-uv)^2>0.
\end{align}
Using \eqref{lem3.4}, we come to prove that for $\mu_k\geq\tilde{n}_k$ and $2\leq k\leq9$,
\begin{equation}\label{3.499}
\frac{15}{16}\leq {Q_k(n)}<Q_k(n+1),
\end{equation}
and
\begin{equation}\label{3.49}
Q_k(n+1)<{Q_k(n)}+\sqrt{(1-{Q_k(n)})^3}.
\end{equation}
Next, we will prove the case when $k=2$.
By Theorem \ref{thm4}, we have for ${\mu_2}\geq167$, \[{Q_2(n)}>1-\frac{\pi^4}{16{{\mu_2}}^3}+\frac{3\pi^4}{16{{\mu_2}}^4}
-\frac{7}{{{\mu_2}}^5}-\frac{130}{{{\mu_2}}^6}.\]
It can be checked that when ${{\mu_2}}\geq4$,
\[1-\frac{\pi^4}{16{{\mu_2}}^3}+\frac{3\pi^4}{16{{\mu_2}}^4}
-\frac{7}{{{\mu_2}}^5}-\frac{130}{{{\mu_2}}^6}\geq\frac{15}{16}\]
Hence, for ${\mu_2}\geq167$,
\[{Q_2(n)}>\frac{15}{16}.\]
Moreover, by Theorem \ref{thm4}, it can be computed directly  that for ${{\mu_2}}\geq11$,
\begin{align*}
Q_2(n+1)-{Q_2(n)}>&\left(1-\frac{\pi^4}{16{{\mu_2^+}}^3}+\frac{3\pi^4}{16{{\mu_2^+}}^4}-\frac{7}{{{\mu_2^+}}^5}-\frac{130}{{{\mu_2^+}}^6}\right)\\
&-\left(1-\frac{\pi^4}{16{{\mu_2}}^3}+\frac{3\pi^4}{16{{\mu_2}}^4}-\frac{6}{{{\mu_2}}^5}+\frac{120+\frac{\pi^8}{256}}{{{\mu_2}}^6}\right)\\
=&\frac{\pi^4}{16}\left(\frac{1}{{{\mu_2}}^3}-\frac{1}{{{\mu_2^+}}^3}+\frac{3}{{{\mu_2^+}}^4}-\frac{3}{{{\mu_2}}^4}\right)\\
&+\frac{6}{{{\mu_2}}^5}-\frac{7}{{{\mu_2^+}}^5}-\frac{250+\frac{\pi^8}{256}}{{{\mu_2}}^6}\\
>&0.
\end{align*}
Here we realize (\ref{3.499}) is valid.
And we find that for ${\mu_2}\geq167$,
\begin{align}\label{3.50}
Q_2(n+1)-{Q_2(n)}&<\left(1-\frac{\pi^4}{16{{\mu_2^+}}^3}
+\frac{3\pi^4}{16{{\mu_2^+}}^4}-\frac{6}{{{\mu_2^+}}^5}
+\frac{120+\frac{\pi^8}{256}}{{{\mu_2^+}}^6}\right)
\nonumber\\[5pt]
&\quad -\left(1-\frac{\pi^4}{16{{\mu_2}}^3}+\frac{3\pi^4}{16{{\mu_2}}^4}
-\frac{7}{{{\mu_2}}^5}-\frac{130}{{{\mu_2}}^6}\right)
\nonumber\\[5pt]
&=\frac{\pi^4}{16}\left(\frac{1}{{{\mu_2}}^3}-\frac{1}{{{\mu_2^+}}^3}\right)
-\frac{3\pi^4}{16}\left(\frac{1}{{{\mu_2}}^4}-\frac{1}{{{\mu_2^+}}^4}\right)
\nonumber\\[5pt]
&\quad-\frac{6}{{{\mu_2^+}}^5}+\frac{7}{{{\mu_2}}^5}+\frac{120+\frac{\pi^8}{256}}
{{{\mu_2^+}}^6}+\frac{130}{{{\mu_2}}^6}.
\end{align}
Notice that when ${{\mu_2}}>0$,
\[\frac{1}{{{\mu_2}}^3}-\frac{1}{{{\mu_2^+}}^3}<\frac{3\pi^2}{4{{\mu_2}}^5},\]
and when ${{\mu_2}}\geq60$,
\[\frac{120+\frac{\pi^8}{256}}{{{\mu_2^+}}^6}
+\frac{130}{{{\mu_2}}^6}<\frac{288}{{{\mu_2}}^6}<\frac{\pi^2}{2{{\mu_2}}^5}.\]
Hence, we can rewrite \eqref{3.50} as
\begin{align*}
Q_2(n+1)-{Q_2(n)}&<\frac{\pi^4}{16}\cdot \frac{3\pi^2}{4{{\mu_2}}^5} + \frac{7}{{{\mu_2}}^5}+\frac{\pi^2}{2{{\mu_2}}^5}
\\[5pt]
&=\frac{\frac{3}{64}\pi^6+7+\frac{1}{2}\pi^2}{{{\mu_2}}^5}.
\end{align*}
By Theorem \ref{thm4},
\begin{align*}1-{Q_2(n)}&>\frac{\pi^4}{16{{\mu_2}}^3}-\frac{3\pi^4}{16{{\mu_2}}^4}+\frac{6}{{{\mu_2}}^5}-\frac{120+\frac{\pi^8}{256}}{{{\mu_2}}^6}.\end{align*}
Note for ${{\mu_2}}\geq73$,
\[\frac{1}{4{{\mu_2}}^3}>\frac{3\pi^4}{16{{\mu_2}}^4}-\frac{6}{{{\mu_2}}^5}+\frac{120+\frac{\pi^8}{256}}{{{\mu_2}}^6},\]so we have
\begin{align*}
1-{Q_2(n)}>\frac{\pi^4}{16{{\mu_2}}^3}-\frac{1}{4{{\mu_2}}^3}
=\frac{\pi^4-4}{16{{\mu_2}}^3}.
\end{align*}
It can be verified that for ${{\mu_2}}\geq17$,
$$\sqrt{\left(\frac{\pi^4-4}{16{{\mu_2}}^3}\right)^3}>\frac{\frac{3}{64}\pi^6+7+\frac{1}{2}\pi^2}{{{\mu_2}}^5}.$$
So (\ref{3.49}) holds for ${{\mu_2}}\geq167$. This prove the cases that for ${{\mu_2}}\geq167$.
When $65\leq n\leq5652$, we can compute that (\ref{3.54}) is also met. Thus for $n\geq65$, $\p_2(n)$ satisfies the third order Tur\'{a}n inequalities.
The proof for other cases is similar, and we omit it here. This completes the proof.
\qed

\vspace{0.5cm}
 \baselineskip 15pt
{\noindent\bf\large{\ Acknowledgements}} \vspace{7pt} \par
The second author would like to acknowledge that the research was supported by the National Natural Science Foundation of China (Grant Nos. 12001182 and 12171487),  the Fundamental Research Funds for the Central Universities (Grant No. 531118010411) and Hunan Provincial Natural Science Foundation of China (Grant No. 2021JJ40037).

\end{document}